\theoremstyle{definition}
\newcommand{\disc}{{\operatorname{disc}}}
\newcommand{\id}{{\operatorname{id}}}
\newcommand{\Cl}{{\operatorname{Cl}}}
\newcommand{\Pic}{\operatorname{Pic}}
\newcommand{\Hom}{\operatorname{Hom}}
\newcommand{\End}{\operatorname{End}}
\newcommand{\GL}{\operatorname{GL}}
\newcommand{\Gr}{\operatorname{Gr}}
\newcommand{\Isog}{\operatorname{Isog}}
\newcommand{\Stab}{\operatorname{Stab}}
\newcommand{\Z}{\mathbb{Z}}
\newcommand{\Q}{\mathbb{Q}}
\newcommand{\R}{\mathbb{R}}
\newcommand{\F}{\mathbb{F}}
\newcommand{\A}{\mathbb{A}}
\newtheorem{theorem}{Theorem}[section]
\newtheorem{proposition}[theorem]{Proposition}
\newtheorem{corollary}[theorem]{Corollary}
\newtheorem{lemma}[theorem]{Lemma}
\newtheorem*{conjecture*}{Conjecture}
\newtheorem{remark}[theorem]{Remark}
\newcommand{\changeurlcolor}[1]{\hypersetup{urlcolor=#1}} 
\newcommand{\thickhline}{%
    \noalign {\ifnum 0=`}\fi \hrule height 1pt
    \futurelet \reserved@a \@xhline
}
\newcolumntype{"}{@{\hskip\tabcolsep\vrule width 1pt\hskip\tabcolsep}}
\title{\large{\textbf{ON THE LOWER BOUND OF THE NUMBER OF ABELIAN VARIETIES OVER $\F_p$}}}
\author{\normalsize{JUNGIN LEE}}
\date{}
\newcommand\shorttitle{ON THE LOWER BOUND OF THE NUMBER OF ABELIAN VARIETIES OVER $\F_p$}
\newcommand\authors{JUNGIN LEE}
\ifodd\value{page}
\authors
\shorttitle
\begin{document}
\maketitle

\vspace{-12mm}

\begin{abstract}
In this paper, we prove that the number $B(p,g)$ of isomorphism classes of abelian varieties over a prime field $\F_p$ of dimension $g$ has a lower bound $p^{\frac{1}{2}g^2(1+o(1))}$ as $g \rightarrow \infty$. 
This is the first nontrivial result on the lower bound of $B(p,g)$. 
We also improve the upper bound $2^{34g^2}p^{\frac{69}{4} g^2 (1+o(1))}$ of $B(p,g)$ given by Lipnowski and Tsimerman (Duke Math. J. 167:3403-3453, 2018) to $p^{\frac{45}{4} g^2(1+o(1))}$. 
\end{abstract}


\section{Introduction} \label{Sec1}

Let $p$ be a prime and $q=p^a$ be its power. Counting the number of isomorphism classes of abelian varieties over a finite field $\F_q$ of dimension $g$ can be divided into two parts: (1) classifying the isogeny classes of abelian varieties over $\F_q$ of dimension $g$ and (2) counting the size of each isogeny class. By Honda-Tate theorem \cite{Hon68, Tat66}, the isogeny classes of abelian varieties over $\F_q$ of dimension $g$ are in bijection with Weil $q$-polynomials of degree $2g$, which gives a solution to the first part. One can deduce that the number $m_q(g)$ of isogeny classes of abelian varieties over $\F_q$ of dimension $g$ is given by
\begin{equation} \label{eq1}
m_q(g) = q^{\frac{1}{4}g^2(1+o(1))}
\end{equation}
(as $g \rightarrow \infty$) by counting the number of Weil $q$-polynomials of degree $2g$.

For the second part, there is a linear-algebraic description of the set of abelian varieties in each isogeny class due to Kottwitz \cite{Kot90}. (See Section \ref{Sub22} for details.) However, it is very hard to compute the size of each isogeny class in general. Explicit formulas for the size of the isogeny classes are only known for some simple cases, such as elliptic curves \cite[Theorem 4.6]{Sch87} or simple abelian surfaces correspond to Weil $q$-number $\pi = \sqrt{q}$ \cite[Theorem 4.4]{XY20}.

Alternatively, one can try to estimate the number $B(q,g)$ of isomorphism classes of abelian varieties over $\F_q$ of dimension $g$ without its explicit formula. For the case $q=p$, Lipnowski and Tsimerman \cite{LT18} obtained an upper bound 
$$
B(p,g) \leq p^{\frac{17}{2} g^2 (1+o(1))}
$$ 
for a fixed $p$ as $g \rightarrow \infty$. Note that there are some minor errors in their proof, which will be corrected in Section \ref{Sub31} of this paper. A (corrected) main result of \cite{LT18} is the following theorem:

\begin{theorem} \label{thm11}
(\cite{LT18}, Theorem 0.1) Let $B(p,g)$ be the number of isomorphism classes of abelian varieties over $\F_p$ of dimension $g$ for a fixed $p$. Then
\begin{equation} \label{eq2}
B(p,g) \leq 2^{34g^2}p^{\frac{69}{4} g^2 (1+o(1))}. 
\end{equation}
\end{theorem}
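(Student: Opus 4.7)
The plan is to factor the count by isogeny class and combine the Honda--Tate bound on the number of isogeny classes with a per-class bound coming from Kottwitz's linear-algebraic parametrization. Since
$$
B(p,g) \le m_p(g) \cdot \max_{[A]} \#\{\text{isomorphism classes in } [A]\}
$$
and \eqref{eq1} already gives $m_p(g) = p^{\tfrac{1}{4} g^2 (1+o(1))}$, it suffices to establish a per-isogeny-class bound of $2^{34 g^2} \cdot p^{17 g^2 (1+o(1))}$, so that the exponents of $p$ add to $\tfrac{1}{4} + 17 = \tfrac{69}{4}$.

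For a fixed isogeny class $[A]$ attached to a Weil $p$-number $\pi$, I would apply Kottwitz's parametrization (Section \ref{Sub22}) of isomorphism classes by a double coset space $I(\Q) \backslash I(\A_f) / K$, where $I$ is the centralizer of $\pi$ in $\GL_{2g}$ and $K \subset I(\A_f)$ is the open compact stabilizer of a fixed reference tuple consisting of an $\ell$-adic Tate module at each prime $\ell \ne p$ and a Dieudonn\'e module at $p$. Splitting this count place-by-place reduces the task to two independent lattice-counting problems: counting Frobenius-stable $\Z_\ell$-lattices in the rational $\ell$-adic Tate module for each $\ell \ne p$, and counting Dieudonn\'e $\Z_p$-lattices inside a fixed rational $F$-isocrystal of rank $2g$ over $W(\F_p) = \Z_p$.

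The prime-to-$p$ contribution is controlled by the conductor of $\Z[\pi, p/\pi]$ inside the maximal order $\mathcal O_E$ of $E = \Q[\pi]$; only finitely many primes $\ell$ dividing this conductor contribute nontrivially, and a Minkowski-type class number estimate, governed by the discriminant of the minimal polynomial of $\pi$, bounds the local product by a $p$-free combinatorial factor which a careful optimization should yield as $2^{34 g^2}$. For the $p$-adic contribution I would stratify Dieudonn\'e lattices by Newton and Hodge polygon and count lattices within each stratum using the structure of the affine Deligne--Lusztig-type subset of the affine Grassmannian cut out by the $F$- and $V$-stability conditions, obtaining a bound on the $p$-adic count of the form $p^{c g^2 (1+o(1))}$ for an explicit $c$ that must be verified to satisfy $c \le 17$.

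The main obstacle will be obtaining this sharp exponent on the $p$-adic side. A crude bound on $F$-stable $\Z_p$-lattices in $\Q_p^{2g}$ is already of order $p^{\Omega(g^2)}$ and far too weak; the necessary savings come from simultaneously imposing $V$-stability (so that the lattice is genuinely a Dieudonn\'e lattice) and using that the semisimple action of $\pi$ forces the lattice to respect the isotypic decomposition of the isocrystal. Tracking the affine Deligne--Lusztig dimensions across isotypic components, optimizing against the $\tfrac{1}{4} g^2$ isogeny-class exponent, and ensuring that the total $p$-exponent stays below $\tfrac{69}{4} g^2$ forms the combinatorial core of the argument; this is presumably where the careful accounting promised in Section \ref{Sub31} is required, and where the minor errors of \cite{LT18} are to be corrected.
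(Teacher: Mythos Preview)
Your high-level setup is correct: one factors $B(p,g) \le m_p(g) \cdot \max_{A_0} |\Isog(A_0)|$, and the Kottwitz bijection $\Isog(A_0) \cong \mathbf{G}(\Q) \setminus (X_p \times X^p)$ with $\mathbf{G}(R) = (\End^0(A_0)\otimes_\Q R)^\times$ is indeed the starting point. But your attribution of the constants $2^{34g^2}$ and $p^{17g^2}$ to, respectively, a $p$-free prime-to-$p$ combinatorial factor and a pure $p$-adic Dieudonn\'e-lattice count misreads the structure of the Lipnowski--Tsimerman argument, and the proposed separation would not reproduce these constants.

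The actual decomposition is not $p$-versus-prime-to-$p$ but into three pieces, each controlled by the single global invariant $d'(A_0) = \prod_{\lambda \ne \mu}(\lambda-\mu)$ (product over pairs of Frobenius eigenvalues). One first breaks $X_p \times X^p$ into $N$ orbits under $\mathbf{G}(\A^{\mathrm{fin}})$, so that $\Isog(A_0)$ is a disjoint union of $N$ double cosets $\mathbf{G}(\Q)\backslash\mathbf{G}(\A^{\mathrm{fin}})/\Stab(L_i)$; then $N = \prod_\ell N_\ell \le \prod_\ell |d'(A_0)|_\ell^{-4} = |d'(A_0)|_\infty^4$ via the product formula (the product runs over \emph{all} primes, $p$ included, with the $p$-adic place handled by the same lattice-in-Jordan-block count as every other $\ell$---no affine Deligne--Lusztig input). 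Since each $|\lambda-\mu|\le 2\sqrt{p}$ one gets $|d'(A_0)|_\infty \le (2\sqrt{p})^{2\binom{2g}{2}}$, hence $N \le (2\sqrt{p})^{8\binom{2g}{2}}$. The stabilizer index $\#(K/\Stab(L_i))$ is bounded by the same power of $|d'(A_0)|_\infty$, and a Minkowski class-number bound for $\prod_i\Cl(\mathcal{O}_{K_i})$ contributes a further $(2\sqrt{p})^{2g^2(1+o(1))}$. Altogether $|\Isog(A_0)| \le (2\sqrt{p})^{(16+16+2)g^2(1+o(1))}=2^{34g^2}p^{17g^2(1+o(1))}$: the factors $2^{34g^2}$ and $p^{17g^2}$ arise \emph{together} as $(2\sqrt{p})^{34g^2}$, and in particular the prime-to-$p$ local counts are not $p$-free because the Frobenius eigenvalues have absolute value $\sqrt{p}$. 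The ``minor errors'' of \cite{LT18} corrected in Section~\ref{Sub31} are of a much more mundane sort than you anticipate: \cite{LT18} wrote $(2\sqrt{p})^{4\binom{2g}{2}}$ where $(2\sqrt{p})^{4\binom{2g}{2}\cdot 2}$ was required, i.e.\ a dropped square in the archimedean bound for $|d'(A_0)|_\infty$.
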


It is natural to investigate the lower bound of $B(q,g)$. Since there is a trivial bound 
$$
B(q,g) \geq m_q(g) = q^{\frac{1}{4}g^2(1+o(1))},
$$
a lower bound of $B(q,g)$ can be meaningful only if it is larger than $q^{\frac{1}{4}g^2}$. Up to our knowledge, there had not been known any nontrivial lower bound of $B(q,g)$ in the literature. 

The purpose of this paper is to provide a nontrivial lower bound of $B(q,g)$ for the case $q=p$. Denote the number of isomorphism classes of simple (precisely, $\F_p$-simple) abelian varieties over $\F_p$ of dimension $g$ by $B^{\text{sim}}(p,g)$. The main result of this paper is the following theorem.

\begin{theorem} \label{thm13}
(Theorem \ref{thm411}) 
\begin{equation} \label{eq4}
B(p,g) \geq B^{\text{sim}}(p,g) \geq p^{\frac{1}{2}g^2(1+o(1))}. 
\end{equation}
\end{theorem}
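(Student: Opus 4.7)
The plan is to obtain the lower bound by counting isomorphism classes of $\F_p$-simple \emph{ordinary} abelian varieties of dimension $g$ in two steps: (i) a lower bound on the number of isogeny classes, and (ii) a lower bound on the typical size of such an isogeny class.

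For step (i), I would enumerate irreducible ordinary Weil $p$-polynomials $f(T) = \sum_{i=0}^{2g} a_i T^{2g-i}$ of degree $2g$. The functional equation $a_{2g-i} = p^{g-i} a_i$ leaves $a_1,\dots,a_g$ free, subject to the Weil bound $|a_i| \leq \binom{2g}{i} p^{i/2}$ and the ordinary condition $p \nmid a_g$. A direct count gives $p^{\frac{1}{4}g(g+1)(1+o(1))} = p^{\frac{1}{4}g^2(1+o(1))}$ such tuples; a sieve/genericity argument should then show that a positive proportion of these $f$ are irreducible, yielding (via Honda--Tate) at least $p^{\frac{1}{4}g^2(1+o(1))}$ isogeny classes of simple ordinary abelian varieties.

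For step (ii), fix such an $f$, let $\pi$ be a root, set $K = \Q(\pi)$ (a CM field of degree $2g$), and let $R_\pi = \Z[\pi,\bar\pi] \subseteq \mathcal{O}_K$. By Deligne's equivalence, the isomorphism classes in the isogeny class are in bijection with $\Pic(R_\pi)$, and the surjection $\Pic(R_\pi) \twoheadrightarrow \Pic(\mathcal{O}_K)$ gives $|\Pic(R_\pi)| \geq h_K$. To lower-bound $h_K$, I would invoke the Brauer--Siegel theorem $h_K R_K = |d_K|^{\frac{1}{2}(1+o(1))}$, combined with (a) a lower bound on $|d_K|$ derived from $|\disc(f)| = \prod_{i<j}|\alpha_i-\alpha_j|^2$ (close to its Weil-bound maximum $p^{g(2g-1)}$ for generic $f$) and control of the conductor $[\mathcal{O}_K:\Z[\pi]]$, and (b) an upper bound on $R_K$ via $R_K = 2^{g-1}Q\,R_{K^+}$ and standard bounds on the regulator of the totally real subfield $K^+$. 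The target is $h_K \geq p^{\frac{1}{4}g^2(1+o(1))}$ on a subfamily of $\pi$ of size $p^{\frac{1}{4}g^2(1+o(1))}$; combining these yields $B^{\text{sim}}(p,g) \geq p^{\frac{1}{2}g^2(1+o(1))}$.

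The main obstacle is step (ii): controlling $h_K$ uniformly over a family of CM fields of growing degree $2g$. Brauer--Siegel requires effective upper bounds on $R_K$ and lower bounds on $|d_K|$ throughout the family, and regulators of CM fields of large degree are notoriously difficult to bound. A cleaner alternative worth keeping in mind is to bypass Brauer--Siegel altogether and build non-isomorphic $R_\pi$-modules by hand---for instance, by exhibiting $p^{\frac{1}{4}g^2(1+o(1))}$ pairwise non-equivalent invertible ideals of small norm, supported at primes $\ell$ where $R_\pi$ is locally maximal and has many degree-one factors---which should give weaker but more robust estimates still sufficient for the theorem.
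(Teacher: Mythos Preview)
Your two-step architecture (count simple isogeny classes, then lower-bound the size of each via a class number) is exactly the paper's strategy, and your step~(i) matches the paper's use of the sets $Y_g$ and $Y_g^{\text{sim}}$. The difficulty is entirely in step~(ii), and here your plan has a genuine gap.

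You propose to pass from $\Pic(R_\pi)$ to $h_K = h(\mathcal{O}_K)$ via the surjection $\Pic(R_\pi)\twoheadrightarrow\Pic(\mathcal{O}_K)$, and then to lower-bound $h_K$ using Brauer--Siegel and a lower bound on $|d_K|$. The problem is that $|d_K| = |\disc(f)|/[\mathcal{O}_K:\Z[\pi]]^2$, and you have no control whatsoever over the conductor $[\mathcal{O}_K:\Z[\pi]]$: it can swallow essentially all of $|\disc(f)|$, leaving $|d_K|$ far too small. There is no sieve that forces $\Z[\pi]$ to be close to maximal for a positive proportion of Weil polynomials of degree $2g$ as $g\to\infty$. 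Your secondary worry about bounding $R_{K^+}$ from above is also real: Brauer--Siegel in growing degree is ineffective and conditional, and ``standard bounds'' on totally real regulators do not give what you need.

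The paper sidesteps both issues simultaneously. It never passes to the maximal order: it works with $h(R)$ for $R=\Z[\pi,\bar\pi]$ throughout, and with $\disc(R)$ rather than $d_K$. The index $[R:\Z[\pi]]$ \emph{is} controlled, by an explicit bound $\le p^{g(g-1)/2}$ (Lemma~\ref{lem41}), so a lower bound on $|\disc(f)|$ (obtained for a positive proportion of $a_g$ via a measure-theoretic argument, Proposition~\ref{prop44}) yields a lower bound on $|\disc(R)|$. To convert this into a class-number bound, the paper uses Stark's \emph{effective} inequality for CM fields (Theorem~\ref{thm47}), which bounds $h(\mathcal{O}_E)/h(\mathcal{O}_{E^+})$ below by a power of $|d_E/d_{E^+}|$ and thereby absorbs the regulator issue. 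The technical heart is Proposition~\ref{prop410}, which transports Stark's bound from $h(\mathcal{O}_E)/h(\mathcal{O}_{E^+})$ to $h(R)/h(R^+)$ in terms of $\disc(R)$: this requires comparing the local unit indices $[\widehat{\mathcal{O}_E}^\times:\widehat{R}^\times]$ with $[\mathcal{O}_E:R]$ (and similarly for $E^+$), together with an upper bound $|\disc(R^+)|\le p^{\frac{1}{2}g^2(1+o(1))}$ coming from the transfinite diameter of $[-2\sqrt{p},2\sqrt{p}]$ (Lemma~\ref{lem49}). Since $h(R^+)\ge 1$, this gives $h(R)\ge |\disc(R)|^{1/2-\varepsilon}/p^{\frac{1}{4}g^2(1+o(1))}$, which is what is needed.

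Your alternative suggestion of building ideal classes by hand at split primes is an interesting idea, but producing $p^{\frac{1}{4}g^2}$ \emph{independent} classes in $\Pic(R_\pi)$ uniformly over the family would require at least as much input as the discriminant route.
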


\begin{remark} \label{rmk1x}
Since
$$
p^{\frac{1}{2}g^2(1+o(1))} = p^{\frac{1}{4}g^2} p^{\frac{1}{4}g^2(1+o(1))},
$$
a lower bound above is $p^{\frac{1}{4}g^2(1+o(1))}$ times larger than $p^{\frac{1}{4}g^2}$. Thus this is the first nontrivial result in this context. 
\end{remark}

This paper is organized as follows. 
We introduce some definitions and notations and review the description of the set of abelian varieties in a single isogeny class in terms of lattices in Dieudonné modules and Tate modules in Section \ref{Sec2}. 
Section \ref{Sec3} is devoted to the improvement of the upper bound of $B(p,g)$ provided in Theorem \ref{thm11}. More precisely, in Section \ref{Sub31}, we briefly summarize how Lipnowski and Tsimerman bounded the size of each isogeny class using the description given in Section \ref{Sec2}. After that, we provide an improvement of their result in Section \ref{Sub32} and \ref{Sub33}. 
The improvement can be done by modifying the proof of \cite{LT18}. 

\begin{theorem} \label{thm12}
(Theorem \ref{thm34}) 
\begin{equation} \label{eq6}
B(p,g) \leq p^{\frac{45}{4} g^2 (1+o(1))}. 
\end{equation}
\end{theorem}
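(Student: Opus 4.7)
The plan is to adapt the approach of Lipnowski and Tsimerman (reviewed and corrected in Section \ref{Sub31}) and tighten the constants that appear in their local lattice-counting bounds. Starting from the Honda-Tate bijection between isogeny classes of abelian varieties over $\F_p$ and Weil $p$-polynomials of degree $2g$, one has
\[
B(p,g) \;\le\; m_p(g)\,\max_{\pi}|\Isog(\pi)|,
\]
and since \eqref{eq1} gives $m_p(g)=p^{\frac{1}{4} g^2(1+o(1))}$, it is enough to establish the bound $\max_{\pi}|\Isog(\pi)| \le p^{11 g^2(1+o(1))}$.

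By the adelic description recalled in Section \ref{Sub22}, the size of a single isogeny class is controlled by a product of local lattice counts: for each $\ell\neq p$ one enumerates Frobenius-stable $\Z_\ell$-lattices in the rational Tate module $V_\ell$, and at $p$ one enumerates $(F,V)$-stable Dieudonné lattices in the rational Dieudonné module. The plan is to revisit each of these local bounds used in the proof of Theorem \ref{thm11} and sharpen them, rather than to invent an entirely new strategy.

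Concretely, I would re-examine the three ingredients on which the Lipnowski-Tsimerman estimate rests: (i) the count of orders containing $\Z[\pi,p/\pi]$ inside the CM algebra $\Q[\pi]$, (ii) the count of invertible ideal classes of a fixed such order, and (iii) the enumeration of Dieudonné lattices at $p$. The first two can be tightened via a more careful bookkeeping of the conductor in $\Z[\pi,p/\pi]$, where the corrected argument of \cite{LT18} uses a conservative discriminant estimate. The third ingredient, which carries most of the loss, can be sharpened by decomposing the Dieudonné module along its Newton slope filtration and using the relation $FV=p$ to cut down the stable lattices on each isoclinic piece, rather than counting $F$-stable lattices directly. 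Summing the savings from (i)-(iii) should improve the exponent by $6g^2$, replacing $\tfrac{69}{4}g^2$ by $\tfrac{45}{4}g^2$.

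The hardest step will be the $p$-adic one: the interaction of $F$ and $V$ in the Dieudonné module is subtle, and the Newton polygon varies with the Weil polynomial $\pi$, so the bound must remain uniform in $\pi$. The worst-case polygon, where the slopes concentrate near $1/2$ and the isoclinic pieces are largest, must still obey the target inequality for the reduction to work. Once uniform local bounds are in place, summing $|\Isog(\pi)|$ over Weil $p$-polynomials of degree $2g$ and combining with \eqref{eq1} yields \eqref{eq6}.
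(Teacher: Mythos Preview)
Your proposal misidentifies where the Lipnowski--Tsimerman argument actually loses. The three ingredients you list---orders containing $\Z[\pi,p/\pi]$, ideal classes of such orders, Dieudonn\'e lattices at $p$---are closer to the Waterhouse description used for the \emph{lower} bound in Section~\ref{Sec4}; the upper bound in \cite{LT18}, as recalled in Section~\ref{Sub31}, runs through the adelic decomposition~\eqref{eq7}: one bounds $N=\prod_\ell N_\ell$ (the number of $\mathbf{G}(\A^{\text{fin}})$-orbits on $X_p\times X^p$), then the stabilizer index, then a class-number product over the simple factors of $\End^0(A_0)$. The loss is not concentrated at the prime $p$: the exponent $4$ in \cite[Corollary~3.17]{LT18}, namely $N_\ell\le |d'(\gamma\mid V)|_\ell^{-4}$, applies at \emph{every} $\ell$, and that uniform exponent is where the slack sits. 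A Newton-slope refinement at $p$ alone cannot touch it.

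The paper's actual improvements are two. First, Lemma~\ref{lem3a} replaces the crude pairwise estimate $|\lambda-\mu|\le 2\sqrt p$ by the sharp Vandermonde inequality $\prod_{i<j}|a_i-a_j|\le m^{m/2}$ for $m$ points on the unit circle, giving $|d'(A_0)|_\infty\le p^{2g^2(1+o(1))}$ in place of $(2\sqrt p)^{2\binom{2g}{2}\cdot 2}$; this is what kills the $2^{34g^2}$ factor and simultaneously tightens the class-number term~\eqref{eq13}. Second, Proposition~\ref{prop31} reworks the partition combinatorics behind the local orbit count $N_{\ell i}$ to show that for any $C>1$ one has $N_\ell\le A^{2g}|d'(\gamma\mid V)|_\ell^{-C}$ with $A$ and $\ell_0$ independent of $g$; taking $C\to 1$ gives $N\le p^{2g^2(1+o(1))}$ (Corollary~\ref{cor32}), which accounts for the entire $6g^2$ drop in the $p$-exponent. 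No separate Dieudonn\'e analysis is performed; the bound at $\ell=p$ comes from the same partition estimate as at the other primes. Your plan as written contains no mechanism that would produce either of these savings, so to make the argument go through you would need to engage directly with the function $f(\ell,n,\delta,d)$ and with the Vandermonde-type bound on $|d'(A_0)|_\infty$.
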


In Section \ref{Sec4}, we provide a lower bound of $B^{\text{sim}}(p,g)$ (and consequently, $B(p,g)$) as explained above. The key ingredients of the proof of Theorem \ref{thm13} are the followings: 
\begin{enumerate}[label=(\roman*)]

\item (Corollary \ref{cor22}) A formula for the size of an isogeny class of simple abelian varieties over $\F_p$ associated to a Weil $p$-number $\pi$ as a sum of class numbers of orders in $\Q (\pi)$;

\item (\cite[Theorem 2]{Sta74}) Stark's inequality relating the class numbers of a CM-field $E$ and its maximal totally real subfield $E^+$ with the discriminants of $E$ and $E^+$;

\item (\cite[Theorem 2.3]{Lee}) Asymptotic information for irreducible Weil $p$-polynomials (equivalently, Weil $p$-polynomials associated to simple abelian varieties over $\F_p$);

\item (Proposition \ref{prop44}) For any $\varepsilon>0$ and a monic polynomial $f$ of degree $2g$, $\left | f(x) \right | \geq p^{g^2(1-\varepsilon)}$ for a large portion of $x$ in the interval $\left [ -\frac{2p^{\frac{g}{2}}}{g}, \frac{2p^{\frac{g}{2}}}{g} \right ]$ if $g$ is sufficiently large;

\item (Proposition \ref{prop410}) Detailed investigations on the discriminants and class numbers of non-maximal orders in $\mathcal{O}_{E}$ and $\mathcal{O}_{E^+}$.


\end{enumerate}


\section{Abelian varieties in each isogeny class} \label{Sec2}

\subsection{Definitions and notations} \label{Sub21}

In this section we provide a list of notation which will be used throughout the paper. Some of the notation are from \cite{LT18}. 

\begin{itemize}

\item Let $p$ be a prime, $q=p^a$ be a power of $p$ and $\F_q$ be a finite field with $q$ elements. Also let $\Z_q := W(\F_q)$ (Witt ring of $\F_q$) and $\Q_q$ be the fraction field of $\Z_q$. Note that $\Q_q$ is the unique unramified extension of $\Q_p$ of degree $a$. 

\item For an abelian variety $A_0$ over $\F_q$, denote the set of isomorphism classes of abelian varieties over $\F_q$ isogenous to $A_0$ by $\Isog (A_0)$. Similarly, denote the set of isomorphism classes of (simple) abelian varieties over $\F_q$ associated to a Weil $q$-number $\pi$ by $\Isog (\pi)$. 

\item For two abelian varieties $A$ and $B$ over $\F_q$, denote $\Hom^0(A,B) := \Hom_{\F_q}(A,B) {\otimes}_{\Z} \Q$ and $\End^0(A) := \End_{\F_q}(A) {\otimes}_{\Z} \Q$. 

\item An element $f \in \Hom^0(A,B)$ is called a \textit{quasi-isogeny} if $nf \in \Hom(A,B)$ is an isogeny for some integer $n$. Two quasi-isogenies $A \overset{f}{\longrightarrow} A_0$ and $B \overset{g}{\longrightarrow} A_0$ are isomorphic if there are two quasi-isogenies $A \overset{h}{\longrightarrow} B$ and $B \overset{k}{\longrightarrow} A$ such that $kh = \id_A$, $hk=\id_B$, $gh=f$ and $fk=g$. 
Denote the set of isomorphism classes of quasi-isogenies $A \overset{f}{\longrightarrow} A_0$ by $\widetilde{\Isog}(A_0)$. 

\item For an abelian variety $A_0$ over $\F_q$ and a prime $\ell \neq p$, let $T_{\ell}(A_0)$ be the $\ell$-adic Tate module of $A_0$ and $V_{\ell}(A_0) := T_{\ell}(A_0) \otimes_{\Z_{\ell}} \Q_{\ell}$. Also let $D(A_0)$ be the covariant Dieudonné module of $A_0$ (which is a module over the Dieudonné ring $D_{\F_q} := W(\F_q)\left \{ F, V \right \}/(FV-p)$) and $D^0 (A_0) := D (A_0) \otimes_{\Z_q} \Q_q$.

\item Let $W$ be a finite-dimensional vector space over a field $K$ and $g \in \GL_K(W)$. Define
$$
d'(g \mid W) := \prod_{\lambda \neq \mu} (\lambda - \mu)
$$
where $\lambda$, $\mu$ run over all pairs of distinct roots (counted with multiplicity) of the characteristic polynomial of $g$. 

\end{itemize}

\subsection{Linear-algebraic description of abelian varieties in an isogeny class} \label{Sub22}

To estimate the size of the set $\Isog (A_0)$ for a fixed abelian variety $A_0$ over $\F_q$, a linear-algebraic description of $\Isog (A_0)$ is needed. We closely follow the exposition of \cite[Section 3]{LT18}. Denote
\begin{equation*}
\begin{split}
X_p & := \left \{ M : M \subset D^0(A_0) \text{ is an } \left \langle F,V \right \rangle \text{-stable } \Z_q \text{-lattice} \right \} \\
X_{\ell} & := \left \{ M : M \subset V_{\ell}(A_0) \text{ is an } \text{Frob}_q \text{-stable } \Z_{\ell} \text{-lattice} \right \}\\
X^p & :=  \prod_{\ell \neq p} {' X_{\ell}} = \left \{ \prod_{\ell \neq p} L_{\ell} : L_{\ell} \in X_{\ell} \text{ and } L_{\ell}=T_{\ell}(A_0) \text{ for almost all } \ell \right \}.
\end{split}
\end{equation*}
Then the map
$$
\widetilde{\Isog}(A_0) \rightarrow X_p \times X^p
$$
given by 
$$
(A \overset{f}{\longrightarrow} A_0) \mapsto f_* D(A) \times \prod_{\ell \neq p} f_* T_{\ell}(A)
$$
is a bijection, and this induces a bijection
$$
\Isog(A_0) \rightarrow \End^0(A_0)^{\times} \setminus X_p \times X^p.
$$

When $A_0$ is a simple abelian variety over $\F_p$, there is an alternative description of $\Isog(A_0)$. Suppose that $A_0$ is a simple abelian variety over $\F_p$ associated to a Weil $p$-number $\pi \neq \sqrt{p}$. Then $E = \End^0(A_0)$ is a CM-field of degree $2 \dim A$ and is equal to $\Q(\pi)$. 

\begin{proposition} \label{prop21}
(\cite{Wat69}, Theorem 6.1) Let $A_0$ be as above and denote $R := \Z [\pi, p \pi^{-1}] \subset \mathcal{O}_E$. \\
(a) The endomorphism rings of abelian varieties isogenous to $A_0$ are exactly the orders in $E$ containing $R$. \\
(b) For each order $R'$ in $E$ containing $R$, the isomorphism classes of abelian varieties with endomorphism ring $R'$ correspond bijectively to the isomorphism classes of lattices in $E$ with order $R'$. 
\end{proposition}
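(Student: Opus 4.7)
The plan is to combine the linear-algebraic description of $\Isog(A_0)$ from the previous subsection with the special structure of a simple isogeny class. Since $A_0$ is simple with $\End^0(A_0) = E = \Q(\pi)$ a CM-field of degree $2 \dim A_0$, every abelian variety $A$ in $\Isog(A_0)$ has $\End^0(A) = E$, so $\End(A)$ is automatically an order in $E$. The Frobenius and Verschiebung of $A$ correspond to $\pi$ and $p\pi^{-1}$ in $E$, hence $R = \Z[\pi, p\pi^{-1}] \subset \End(A)$; this gives the easy inclusion in part (a).

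The core task is therefore to produce a concrete bijection between $\Isog(A_0)$ and the set of isomorphism classes of $R$-lattices in $E$ that identifies multiplier orders with endomorphism rings. I would first observe that for each $\ell \neq p$, the space $V_\ell(A_0)$ is free of rank one over $E \otimes_\Q \Q_\ell$: the algebra $E$ acts faithfully, $\dim_{\Q_\ell} V_\ell(A_0) = 2 \dim A_0 = [E:\Q]$, and $E \otimes \Q_\ell$ is a product of fields. Analogously, $D^0(A_0)$ is free of rank one over $E \otimes_\Q \Q_p$, where the hypothesis $\pi \neq \sqrt{p}$ ensures that $E \otimes \Q_p$ is commutative semisimple. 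Under these identifications, the stability conditions defining $X_\ell$ and $X_p$ translate to the conditions of being a rank-one $R \otimes \Z_\ell$- (respectively $R \otimes \Z_p$-) submodule of the ambient one-dimensional space.

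Next I would invoke a local-global principle: any $R$-lattice $\Lambda \subset E$ is determined up to $E^\times$-scaling by its collection of completions $(\Lambda \otimes \Z_v)_v$, and every coherent family of local lattices (agreeing with a fixed lattice at almost all places) arises from a unique such global $R$-lattice. Combining this with the bijection $\Isog(A_0) \cong E^\times \backslash (X_p \times X^p)$ from Section \ref{Sub22} yields the bijection in part (b). The multiplier order $\{ x \in E : x \Lambda \subset \Lambda \}$ of a lattice $\Lambda$ matches the endomorphism ring of the associated abelian variety, so taking $\Lambda = R'$ itself realizes any prescribed order $R' \supset R$ and completes the reverse direction of part (a).

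The main obstacle I anticipate is the rigorous verification of the local-to-global step when $R$ is not the maximal order $\mathcal{O}_E$: orders in a number field are one-dimensional but not Dedekind in general, so one cannot simply invoke unique factorization of fractional ideals. Instead one must show that every $R$-lattice in $E$ is locally principal and that its isomorphism class is captured by the Picard group of its multiplier order. A secondary technical point is confirming that the identification of $D^0(A_0)$ as a free rank-one $E \otimes \Q_p$-module is compatible with the $F$ and $V$ actions in a way that makes $\pi$ and $p\pi^{-1}$ play the expected role; this is where one uses that the characteristic polynomial of $F$ on $D^0(A_0)$ is the minimal polynomial of $\pi$ raised to a suitable power, forced by the simplicity of $A_0$.
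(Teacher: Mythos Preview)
The paper does not give its own proof of this proposition: it is quoted verbatim from Waterhouse \cite[Theorem~6.1]{Wat69} and used as a black box (with \cite[Theorem~5.1]{XYY16} cited for the non-simple generalization). So there is no ``paper's proof'' to compare against; your proposal is an attempt to reconstruct an argument that the author deliberately outsourced.

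That said, your outline is a reasonable sketch of how such a proof goes, and it is essentially the adelic reformulation of Waterhouse's original argument. One point deserves correction. You write that the main obstacle is to show that every $R$-lattice in $E$ is locally principal and that its class lies in the Picard group of its multiplier order. This is \emph{not} needed for the proposition, and in fact it is false in general: when $R$ is not Bass there exist $R$-lattices that are not locally principal over their own multiplier order. The proposition asserts a bijection with \emph{all} lattice classes (what the paper calls the ideal class monoid $\mathrm{ICM}(R)$), not just the invertible ones. The adelic local--global correspondence you invoke works at the level of arbitrary full $\Z$-lattices with an $R$-action; no principality is required, because the global lattice is recovered as the intersection inside $E$ of the local ones. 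Your worry is exactly the content of Remark~\ref{rmk23} and the reason Corollary~\ref{cor22} is stated as an inequality rather than an equality---but it plays no role in establishing Proposition~\ref{prop21} itself.
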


See \cite[Theorem 5.1]{XYY16} for the generalization to non-simple case. Note that the result stated above is enough for our purpose. By Proposition \ref{prop21}, there is a bijection between $\Isog (A_0)$ and the ideal class monoid $\text{ICM}(R)$ of $R$ (see \cite[Definition 3.1]{Mar18}). The next corollary is the starting point of the proof of Theorem \ref{thm13}. 

\begin{corollary} \label{cor22}
$$
\left | \Isog (A_0) \right | = \left | \Isog (\pi) \right | \geq \sum_{R \subset B \subset \mathcal{O}_E} h(B)
$$
where $B$ runs through the orders of $E$ containing $R$ and $h(B)$ is the class number of $B$. In particular, 
$$
\left | \Isog (A_0) \right | \geq h(R).
$$
\end{corollary}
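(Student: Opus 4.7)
The plan is to combine the two observations made just before the corollary: the identity $\Isog(A_0) = \Isog(\pi)$ (which holds because $A_0$ is simple and is the abelian variety associated to the Weil $p$-number $\pi$, so any abelian variety isogenous to $A_0$ also has Weil polynomial a power of the minimal polynomial of $\pi$ and hence lies in $\Isog(\pi)$, and vice versa), and the bijective correspondence in Proposition \ref{prop21}.

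For the inequality, I would partition $\Isog(A_0)$ according to the endomorphism ring of a representative. By Proposition \ref{prop21}(a), every endomorphism ring arising this way is one of the orders $B$ with $R \subseteq B \subseteq \mathcal{O}_E$, and by Proposition \ref{prop21}(b) the fiber over such a $B$ is in bijection with isomorphism classes of $E$-lattices $L$ whose multiplier ring $(L:L)$ equals $B$. Hence
\[
\left|\Isog(A_0)\right| \;=\; \sum_{R \subseteq B \subseteq \mathcal{O}_E} \#\bigl\{\text{iso. classes of lattices } L \subset E \text{ with } (L:L)=B\bigr\}.
\]

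The key step is to bound each fiber below by $h(B)$. I would use that every invertible fractional $B$-ideal $L$ automatically satisfies $(L:L)=B$, and two invertible $B$-ideals are isomorphic as $B$-modules precisely when they differ by a principal $B$-ideal. Therefore the invertible classes inject into the $B$-fiber, giving at least $h(B)=|\Pic(B)|$ lattices there. Summing over $B$ yields the displayed inequality, and restricting the sum to the single term $B=R$ gives the ``in particular'' claim $\left|\Isog(A_0)\right|\geq h(R)$.

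The main obstacle is essentially notational rather than mathematical: one must be careful that $h(B)$ is taken to mean the Picard number (invertible classes), not the full ideal class monoid, because for non-Gorenstein $B$ there may exist lattices with $(L:L)=B$ that are not invertible $B$-ideals. This only makes the displayed inequality weaker, so it does not affect the bound; indeed it explains why the corollary is stated as an inequality rather than an equality.
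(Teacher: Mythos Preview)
Your proof is correct and follows essentially the same route as the paper. The paper phrases it slightly more compactly by invoking the bijection $\Isog(A_0)\cong\text{ICM}(R)$ and the containment $\text{ICM}(R)\supset\bigsqcup_{R\subset B\subset\mathcal{O}_E}\Pic(B)$, which is exactly your partition-by-endomorphism-ring argument together with the observation that invertible $B$-ideals have multiplier ring $B$.
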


\begin{proof}
By the definition of $\text{ICM} (R)$, we have $\displaystyle \text{ICM} (R) \supset \bigsqcup_{R \subset B \subset \mathcal{O}_E} \Pic (B)$. 
\end{proof}

\begin{remark} \label{rmk23}
By  \cite[Proposition 3.7]{Mar18}, the equality 
$$
\left | \Isog (\pi) \right | = \sum_{R \subset B \subset \mathcal{O}_E} h(B)
$$
holds if and only if $R$ is Bass, i.e. every order $B$ is Gorenstein. 
\end{remark}


\section{Upper bound of $B(p,g)$} \label{Sec3}

\subsection{Bouding the size of $\Isog(A_0)$} \label{Sub31}

Starting from the bijection
$$
\Isog(A_0) \rightarrow \End^0(A_0)^{\times} \setminus X_p \times X^p
$$
in Section \ref{Sub22}, Lipnowski and Tsimerman \cite{LT18} obtained an upper bound of $\left | \Isog (A_0) \right |$. In this section we summarize their strategy and result with some corrections. Let $\mathbf{G}$ be the algebraic group over $\Q$ defined by
$$
\mathbf{G} (R) := (\End^0(A_0) \otimes_{\Q} R)^{\times}
$$
for every $\Q$-algebra $R$. The group $\mathbf{G} (\Q) = \End^0(A_0)^{\times}$ acts on $X_p \times X^p$ through $\mathbf{G} (\A ^{\text{fin}})$ where $\A ^{\text{fin}}$ is the finite adele ring of $\Q$. Let
$$
N := \left | \mathbf{G}(\A^{\text{fin}}) \setminus X_p \times X^p \right |
$$
and $L_1, \cdots, L_N$ be the orbit representatives for the action of $\mathbf{G}(\A^{\text{fin}})$ on $X_p \times X^p$. Then
\begin{equation} \label{eq7}
\End^0(A_0)^{\times} \setminus X_p \times X^p \cong \bigsqcup_{i=1}^{N} \mathbf{G}(\Q) \setminus \mathbf{G}(\A^{\text{fin}}) / \Stab_{\mathbf{G}(\A^{\text{fin}})} (L_i).
\end{equation}
From now on, assume that $q=p$ is a prime. Let $V = V_{\ell}(A_0)$ if $\ell \neq p$, $V = D^0(A_0)$ if $\ell=p$ and $\gamma \in \GL(V)$ denote the Frobenius element. Then the value $d'(\gamma \mid V) \in \Z$ is independent of the choice of $\ell$, which is denoted by $d'(A_0)$. Let 
$$
\chi_{\gamma} = f_1^{n_1} \cdots f_{j_{\ell}}^{n_{j_{\ell}}} \:\: (f_i \in \Z_{\ell}[x])
$$ 
be the decomposition of the characteristic polynomial of $\gamma$ into irreducible factors. Then $V = \oplus_{i=1}^{j_{\ell}}V_i$ where $V_i = \ker f_i(\gamma)$. 

An upper bound of the size of $\Isog (A_0)$ comes from an upper bound of the number $N$ and an upper bound of the size of $\mathbf{G}(\Q) \setminus \mathbf{G}(\A^{\text{fin}}) / \Stab_{\mathbf{G}(\A^{\text{fin}})} (L_i)$ for each $i$.

\begin{enumerate}

\item For a prime $\ell$ (including $p$), $N = \prod_{\ell} N_{\ell}$ where
$$
N_{\ell} := \left |\mathbf{G} (\Q_{\ell}) \setminus X_{\ell}  \right |.
$$
Denote $V_{\leq i} := \oplus_{k \leq i} V_k$ and $\Gr_i V := V_{\leq i} / V_{\leq i-1}$. Let $\gamma_i$ be the restriction of $\gamma$ on $\Gr_i V$ and $N_{\ell i}$ be the number of orbits of $\mathbf{Z}_{\gamma_i}(\Q_{\ell})$ (=the centralizer of $\gamma_i$ in $\GL(\Gr_i V)$) acting on the collection of $\gamma_i$-stable lattices in $\Gr_i V$. In Section 3.3 of \cite{LT18}, the bound of $N$ is derived as follows. 
\begin{enumerate}
\item (\cite[Corollary 3.16]{LT18}) $N_{\ell i} \leq \left | d' (\gamma_i \mid (\Gr _iV)) \right |_{\ell}^{-4}$.

\item (\cite[Corollary 3.17]{LT18}) $N_{\ell} \leq \left | d'(\gamma \mid V) \right |_{\ell}^{-4} = \left | d'(A_0) \right |_{\ell}^{-4}$.

\item (\cite[Corollary 3.18]{LT18}) $N= \prod_{\ell} N_{\ell} \leq \left | d'(A_0) \right |_{\infty}^4 \leq (2 \sqrt{p})^{4 \cdot \binom{2g}{2} \cdot 2}$. (The bound is written in \cite{LT18} as $(2 \sqrt{p})^{4 \cdot \binom{2g}{2}}$, but it should be corrected to $(2 \sqrt{p})^{4 \cdot \binom{2g}{2} \cdot 2}$.)
\end{enumerate}

\item There exists a same mistake as above in the proof of \cite[Corollary 3.22]{LT18}. Since the inequality (25) of \cite{LT18} comes from
\begin{equation} \label{eq15}
\# ( K/ \Stab_{\mathbf{G}(\A^{\text{fin}})} (\mathbb{L}) ) \leq \left | d'(A_0) \right |_{\infty}^4 \exp(o(g^2)),
\end{equation}
it should be corrected to
\begin{equation} \label{eq8}
\# ( K/ \Stab_{\mathbf{G}(\A^{\text{fin}})} (\mathbb{L}) ) \leq (2 \sqrt{p})^{4 \cdot \binom{2g}{2} \cdot 2 (1+o(1))}.
\end{equation}
The upper bound in the inequality (31) of \cite{LT18} should be corrected to $(2 \sqrt{p})^{2 \binom{2d}{2}}$ so the inequality (32) of \cite{LT18} should be
\begin{equation} \label{eq9}
\begin{split}
\# \prod_{i=1}^{m} \Cl (\mathcal{O}_{K_i}) & \leq \left ( \prod_{i=1}^{m} (2 \sqrt{p})^{\binom{d_i}{2}} \left ( \binom{d_i}{2} \log (2 \sqrt{p}) \right )^{d_i-1} \right ) \cdot 500^g e^g \\
& \leq (2 \sqrt{p})^{2g^2} \cdot \binom{2g}{2}^{2g} \cdot (\log 2 \sqrt{p})^{2g} 500^g e^g \\
& = (2 \sqrt{p})^{2g^2(1+o(1))}.
\end{split}
\end{equation}

\end{enumerate}

Following the proof of \cite[Proposition 3.23]{LT18} with above inequalities, we obtain
\begin{equation} \label{eq10}
\begin{split}
\left | \Isog (A_0) \right | 
& \leq C_0 \cdot (2 \sqrt{p})^{4 \cdot \binom{2g}{2} \cdot 2} \cdot (2 \sqrt{p})^{4 \cdot \binom{2g}{2} \cdot 2 (1+o(1))} \cdot (2 \sqrt{p})^{2g^2(1+o(1))} \\
& = 2^{34g^2} p^{17g^2(1+o(1))}.
\end{split}
\end{equation}
Thus
\begin{equation} \label{eq11}
B(p, g) \leq m_p(g) 2^{34g^2} p^{17g^2(1+o(1))}  = 2^{34g^2} p^{\frac{69}{4}g^2(1+o(1))},
\end{equation}
which completes the proof of Theorem \ref{thm11}. We improve this bound in the rest of this section. 


\subsection{Bound on $\left | d'(A_0) \right |_{\infty}$} \label{Sub32}

First we can easily reduce the upper bound on $\left | d'(A_0) \right |_{\infty}$. 

\begin{lemma} \label{lem3a}
For $m$ complex numbers $a_1, \cdots, a_m$ with absolute values $1$, 
$$
\prod_{1 \leq i < j \leq m} \left | a_i - a_j \right | \leq m^{\frac{m}{2}}
$$
and the equality holds if and only if $m$ numbers are equidistributed on a unit circle.
\end{lemma}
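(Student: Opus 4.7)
The plan is to recognize the left-hand side as the absolute value of a Vandermonde determinant and then to apply Hadamard's inequality. Explicitly, let $V$ be the $m \times m$ matrix with entries $V_{ij} = a_i^{j-1}$. Then
$$
\det V = \prod_{1 \leq i < j \leq m}(a_j - a_i),
$$
so the quantity we want to bound is exactly $|\det V|$.

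Now I would invoke Hadamard's inequality: $|\det V| \leq \prod_{i=1}^m \|r_i\|_2$ where $r_i$ is the $i$-th row of $V$. Since $|a_i|=1$, each row $(1, a_i, a_i^2, \ldots, a_i^{m-1})$ has Euclidean norm $\sqrt{m}$, giving
$$
\prod_{1 \leq i < j \leq m}|a_i - a_j| = |\det V| \leq m^{m/2},
$$
which is the claimed bound.

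For the equality case, Hadamard's inequality is sharp precisely when the rows are pairwise orthogonal (or some row is zero, but the latter would force $a_i = 0$, contradicting $|a_i|=1$). Orthogonality of rows $i \neq j$ reads
$$
\sum_{k=0}^{m-1} (a_i \overline{a_j})^k = 0.
$$
Provided the $a_i$ are distinct (otherwise the left-hand side of the inequality is $0$, a degenerate equality case), this geometric sum vanishes iff $(a_i \overline{a_j})^m = 1$, i.e.\ $a_i^m = a_j^m$. Thus all $a_i^m$ share a common value $c$ on the unit circle, so the $a_i$ are the $m$ distinct $m$-th roots of $c$, which is exactly the equidistribution condition on the unit circle. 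Conversely, any such configuration trivially satisfies the orthogonality condition.

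There is no real obstacle here; the only subtlety is making sure the equality characterization is stated correctly. The whole argument is a standard application of Hadamard's inequality to the Vandermonde matrix, and no additional machinery from the paper is needed.
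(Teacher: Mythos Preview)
Your argument is correct and is exactly the Vandermonde--Hadamard proof that the paper defers to via the MathOverflow reference \cite{Iva12}. One small wording slip: when the $a_i$ are not all distinct the product is $0<m^{m/2}$, so that is a case of \emph{strict} inequality rather than a ``degenerate equality case''; this does not affect the proof.
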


\begin{proof}
See the answer in \cite{Iva12}.
\end{proof}

This gives the bound
\begin{equation} \label{eq12}
\left | d'(A_0) \right |_{\infty} \leq \left ((2g)^{\frac{2g}{2}} \sqrt{p}^{\binom{2g}{2}}  \right )^2 = (2g)^{2g} p^{\binom{2g}{2}} = p^{2g^2(1+o(1))}.
\end{equation}
By the same reason, $\left | D_K \right | \leq d^d \sqrt{p}^{2 \binom{d}{2}}$ in the equation (31) of \cite{LT18} and
\begin{equation} \label{eq13}
\# \prod_{i=1}^m \text{Cl}(\mathcal{O}_{K_i}) \leq p^{g^2(1+o(1))}
\end{equation}
in the equation (32) of \cite{LT18}.


\subsection{Bound on $N$} \label{Sub33}

The following proposition is a refinement of \cite[Corollary 3.17]{LT18}. 

\begin{proposition} \label{prop31}
For every $C>1$, there exist $A>0$ and $\ell_0$ (each is independent of $g$) such that $N_{\ell} \leq A^{2g} \left | d'(\gamma \mid V) \right |_{\ell}^{-C}$ for all prime $\ell$ and $N_{\ell} \leq  \left | d'(\gamma \mid V) \right |_{\ell}^{-C}$ for all $\ell > \ell_0$. 
\end{proposition}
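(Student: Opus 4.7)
The plan is to refine \cite[Corollary 3.16]{LT18}, which gives $N_{\ell i} \leq |d'(\gamma_i \mid \Gr_i V)|_\ell^{-4}$, by replacing the exponent $4$ with any prescribed $C>1$ at the cost of an auxiliary multiplicative constant of the form $A(\ell)^{\dim \Gr_i V}$. Here $A(\ell)$ will be a combinatorial quantity bounded uniformly in $\ell$ and equal to $1$ once $\ell$ exceeds some threshold $\ell_0 = \ell_0(C)$. Once this per-graded-piece refinement is in hand, multiplying over the irreducible factors of $\chi_\gamma = f_1^{n_1}\cdots f_{j_\ell}^{n_{j_\ell}}$ over $\Q_\ell$ and using $\sum_i n_i \deg f_i = 2g$ yields $N_\ell \leq A(\ell)^{2g} |d'(\gamma \mid V)|_\ell^{-C}$, which implies both assertions of the proposition upon taking $A := \sup_\ell A(\ell)$.

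For each $i$, let $E_i := \Q_\ell[x]/(f_i)$ and $\mathcal{O}_{i,\ell} := \Z_\ell[\gamma_i] \hookrightarrow E_i$. The $\gamma_i$-stable $\Z_\ell$-lattices in $\Gr_i V$ are precisely the finitely generated torsion-free $\mathcal{O}_{i,\ell}$-modules of generic rank $n_i$, and the orbits under $\mathbf{Z}_{\gamma_i}(\Q_\ell) = \GL_{n_i}(E_i)$ are exactly their isomorphism classes. The key step is to parametrize these classes by pairs consisting of an overorder $\mathcal{O} \supseteq \mathcal{O}_{i,\ell}$ inside $\mathcal{O}_{E_i}$ together with an element of a Picard-type group attached to $\mathcal{O}$ (in rank one this is classical; in higher rank an analogue holds with an extra factor polynomial in $n_i$). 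I would then bound the number of overorders and the size of each Picard contribution by powers of the index $[\mathcal{O}_{E_i} : \mathcal{O}_{i,\ell}]$ with exponents arbitrarily close to $1$, and convert from this index to $|d'(\gamma_i \mid \Gr_i V)|_\ell$ using the relation $[\mathcal{O}_{E_i} : \mathcal{O}_{i,\ell}]^2 \cdot |\disc(\mathcal{O}_{E_i})|_\ell = |\disc(\mathcal{O}_{i,\ell})|_\ell$ together with the explicit formula expressing $\disc(\mathcal{O}_{i,\ell})$ in terms of $d'(\gamma_i \mid \Gr_i V)$. For $\ell$ larger than an explicit $\ell_0$ depending only on $C$, every combinatorial quantity entering the count collapses to $\leq 1$, yielding the improved bound $N_\ell \leq |d'(\gamma \mid V)|_\ell^{-C}$ without any prefactor.

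The main obstacle is to make the prefactor $A$ genuinely independent of $g$. The overorder and Picard counts involve data---such as the number of prime ideals of $\mathcal{O}_{E_i}$ dividing the conductor of $\mathcal{O}_{i,\ell}$ and their residue degrees---that a priori grow with $g$. The resolution is to exploit the slack $\varepsilon := C - 1 > 0$: a bound of the form $(\text{index})^{1+\varepsilon/2}$ on each overorder count absorbs any $\exp(O(\deg f_i))$ combinatorial factor into a base constant depending only on $\varepsilon$, and similarly for the Picard contribution. After multiplying over $i$, the accumulated prefactor is exactly $A^{\sum_i n_i \deg f_i} = A^{2g}$, as required, and calibrating $\ell_0$ so that the analogous partition-counting quantities are dominated by $1$ for $\ell > \ell_0$ completes the argument.
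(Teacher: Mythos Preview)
Your reduction step---passing from $N_\ell$ to the per-factor quantities $N_{\ell i}$ via \cite[Corollary 3.17]{LT18}, absorbing the cross-term $\ell^{\delta'}$ into $|d'(\gamma\mid V)|_\ell^{-C}$, and then multiplying back up using $\sum_i n_i\deg f_i = 2g$---is exactly the skeleton the paper uses. The disagreement is entirely in how you propose to bound $N_{\ell i}$.

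The parametrization by ``(overorder of $\mathcal{O}_{i,\ell}$, element of a Picard-type group)'' is where the argument breaks. Two issues. First, $\mathcal{O}_{i,\ell}$ and all its overorders are \emph{local} rings, so every Picard group in sight is trivial; your rank-one count would then collapse to the number of overorders, but isomorphism classes of rank-one torsion-free modules over a non-Bass local order outnumber the overorders, so this undercounts. Second, and more seriously, for $n_i\ge 2$ the claim that the rank-$n_i$ count differs from the rank-one count only by ``an extra factor polynomial in $n_i$'' is false: already the upper bound $f(\ell,n,\delta,d)$ established in \cite{LT18} carries a prefactor $\ell^{\frac{1}{2}n(n-1)\delta}$, which is exponential in $n^2\delta$ and cannot be absorbed into anything polynomial in $n$. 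Consequently your proposed estimate $N_{\ell i}\lesssim [\mathcal{O}_{E_i}:\mathcal{O}_{i,\ell}]^{1+\varepsilon}\cdot(\text{poly in }n_i)$ is smaller than the true $N_{\ell i}$ can be, and the argument does not go through.

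The paper takes a different, more concrete route: it never attempts a structural classification of the lattices but instead works directly with the explicit combinatorial upper bound
\[
N_{\ell i}\le f(\ell,n,\delta,d)=\ell^{\frac{1}{2}n(n-1)\delta}\sum_{0\le b\le \delta n/d}\ \sum_{a_1+\cdots+a_n=b}\ \prod_{j=1}^{n}\Bigl(\sum_{\lambda\in\mathbf P(a_j)}(\ell^d)^{a_j-\ell(\lambda)}\Bigr)
\]
from \cite{LT18}, where $|d'(\gamma_i\mid\Gr_iV)|_\ell^{-1}=\ell^{n^2\delta}$. One first reduces to $d=1$, then splits into two regimes. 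For $n\ge 2$ the Hardy--Ramanujan bound on $P(m)$ shows $f(\ell,n,\delta,1)\le \ell^{n^2\delta}$ once either $\ell^{n-1}\ge 64$ or $\delta$ exceeds a threshold $\delta_0$ depending only on nothing; for $n=1$ a second application of Hardy--Ramanujan (with a constant depending on $C$) gives $f(\ell,1,\delta,1)\le\ell^{C\delta}$ once $\ell^\delta$ exceeds a constant depending only on $C$. The finitely many exceptional triples $(\ell,n,\delta)$ are handled by taking $A$ to be the maximum of $f$ over them and $\ell_0$ large enough to avoid them. Your intuition about using the slack $C-1$ and about the existence of a threshold $\ell_0$ is correct and is exactly what drives the $n=1$ case here; what is missing from your outline is the partition-theoretic input that replaces the overorder/Picard picture.
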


\begin{proof}
By the proof of \cite[Corollary 3.7]{LT18}, 
$$
N_{\ell} \leq \ell^{\delta'} \prod_{i=1}^{j_{\ell}} N_{\ell i}
$$ 
and 
$$
\left ( \prod_{i=1}^{j_{\ell}} \left | d' (\gamma_i \mid (\Gr_i V)) \right |_{\ell}^{-C} \right ) \cdot \ell^{C \delta'} = \left | d'(\gamma \mid V) \right |_{\ell}^{-C}
$$
for some $\delta' \geq 0$. So it is enough to show that there exist $A>0$ and $\ell_0$, independent of $g$ such that 
$$
N_{\ell i} \leq A \left | d' (\gamma_i \mid (\Gr_i V)) \right |_{\ell}^{-C}
$$ 
for all $i, \ell$ and 
$$
N_{\ell i} \leq  \left | d' (\gamma_i \mid (\Gr_i V)) \right |_{\ell}^{-C}
$$ 
for all $i$ and $\ell > \ell_0$. (Note that $\text{dim} V = 2g$ so $j_{\ell} \leq 2g$ for all $\ell$.)


\noindent For an integer $m \geq 0$, denote the set of partitions of $m$ by $\mathbf{P}(m)$ and let 
$$
P(m) := \left | \mathbf{P}(m) \right |.
$$ 
Also let $P(m,k)$ be the number of partitions of $m$ into $k$ parts. For $\lambda \in P(m)$, denote the length of $\lambda$ by $\ell(\lambda)$. 
Combining the inequality
$$
\text{length}_R (R^n/L) \leq \frac{n \delta}{d}
$$
in \cite[p. 3421]{LT18} and inequalities (14) and (15) of \cite{LT18}, $N_{\ell i}$ is bounded by the value of
$$
f(\ell, n, \delta, d) := {\ell}^{\frac{1}{2}n(n-1) \delta} \sum_{0 \leq b \leq \frac{\delta n}{d}} \sum_{a_1 + \cdots + a_n=b} \prod_{i=1}^{n} \left ( \sum_{\lambda_i \in \mathbf{P}(a_i)} (\ell^d)^{a_i - \ell(\lambda_i)} \right ).
$$
for some $n, \delta, d \geq 1$ satisfying 
$$
\left | d' (\gamma_i \mid (\Gr_i V)) \right |_{\ell}^{-1} = \ell^{n^2 \delta}.
$$
Since
\begin{equation*}
\begin{split}
f(\ell, n, \delta, d) & = {\ell}^{\frac{1}{2}n(n-1) \delta} \sum_{0 \leq db \leq \delta n} \sum_{da_1 + \cdots + da_n=db} \prod_{i=1}^{n} \left ( \sum_{d \lambda_i \in \mathbf{P}(da_i)} \ell^{da_i - \ell(d \lambda_i)} \right ) \\
& \leq f(\ell, n, \delta, 1)
\end{split}
\end{equation*}
for any $d \geq 1$, $N_{\ell i}$ is bounded by $f(\ell, n, \delta, 1)$. Denote 
$$
g(\ell, n, b) := \sum_{a_1 + \cdots + a_n=b} \prod_{i=1}^{n} \left ( \sum_{\lambda_i \in \mathbf{P}(a_i)} \ell^{a_i - \ell(\lambda_i)} \right )
$$
for simplicity. \\


(1) $n \geq 2$ : By the proof of \cite[Corollary 3.16]{LT18}, 
$$f(\ell, n, \delta, 1) \leq 
\ell^{\frac{1}{2}n(n+1) \delta} 2^{3 \delta n}$$
and if $\ell^{n-1} \geq 64$, then
$$\ell^{\frac{1}{2}n(n+1) \delta} 2^{3 \delta n} \leq \ell^{n^2 \delta} = \left | d' (\gamma_i \mid (\Gr_i V)) \right |_{\ell}^{-1}. $$
By Hardy-Ramanujan formula \cite{HR18}, there exists $M>0$ such that $P(m) \leq M \cdot 2^{\frac{m}{4}}$ for all $m \geq 0$. Then 
\begin{equation*}
\begin{split}
g(\ell, n, b) & \leq \sum_{a_1 + \cdots + a_n=b} \ell^{b} P(a_1) \cdots P(a_n) \\
& \leq \ell^{b} M^n 2^{\frac{b}{4}} \binom{b+n-1}{n-1} \\
& \leq \ell^{b} M^n \ell^{\frac{b}{4}}(b+1)^{n-1}
\end{split}
\end{equation*}
so
\begin{equation*}
\begin{split}
f(\ell, n, \delta, 1) & = {\ell}^{\frac{1}{2}n(n-1) \delta} \left ( 1 + \sum_{b=1}^{\delta n} g(\ell, n, b) \right ) \\
& \leq {\ell}^{\frac{1}{2}n(n-1) \delta} \cdot  {\ell}^\frac{5 \delta n}{4} M^n (\delta n +1)^{n-1} (\delta n +1) \\
& \leq \ell^{\frac{7}{8}n^2 \delta}M^n (\delta n+1)^n. 
\end{split}
\end{equation*}
Choose $\delta_0 >0 $ such that $\displaystyle M(x+1) \leq 2^{\frac{1}{8}x}$ for every $x \geq \delta_0$. Then if $\delta \geq \delta_0$, 
\begin{equation*}
\begin{split}
f(\ell, n, \delta, 1) & \leq \ell^{\frac{7}{8}n^2 \delta} (M(\delta n +1))^n \\
& \leq \ell^{\frac{7}{8}n^2 \delta} (2^{\frac{1}{8} \delta n})^n \\
& \leq \ell^{n^2 \delta}.
\end{split}
\end{equation*}


\vspace{2mm}

(2) $n = 1$ : 
\begin{equation*}
\begin{split}
f(\ell, 1, \delta, 1) & =1+\sum_{b=1}^{\delta} \sum_{k=1}^b P(b,k) \ell^{b-k}  \\
& \leq 1+\sum_{b=1}^{\delta} \sum_{k=1}^b P(b) \ell^{b-k}  \\
& \leq \delta P(\delta) \ell^{\delta}. 
\end{split}
\end{equation*}
By Hardy-Ramanujan formula, there exists $N=N(C)>0$ such that 
$$
mP(m) \leq N \cdot 2^{\frac{C-1}{2}m}
$$ 
for all $m \geq 1$. Then for $\ell^{\delta} \geq N^{\frac{2}{C-1}}$, 
\begin{equation*}
\begin{split}
f(\ell, 1, \delta, 1) & \leq \delta P(\delta) \ell^{\delta}  \\
& \leq N \cdot \ell^{\frac{C-1}{2} \delta} \ell^{\delta}   \\
& \leq \ell^{C \delta}  = \left | d' (\gamma_i \mid (\Gr_i V)) \right |_{\ell}^{-C}.
\end{split}
\end{equation*}


\vspace{2mm}

Now consider the finite set
$$
S := \left \{ (\ell, n, \delta) \mid n \geq 2, \, \ell^{n-1}<64, \delta < \delta_0  \right \} \cup
\left \{ (\ell, 1, \delta) \mid \ell^{\delta} < N^{\frac{2}{C-1}}  \right \}
$$
and let
$$
A := \text{max}\left \{ f(\ell, n, \delta, 1) \mid (\ell, n, \delta) \in S \right \}
$$
and
$$
\ell_0 := \text{max}\left \{ 64, N^{\frac{2}{C-1}} \right \}.
$$
Note that each of $S$, $A$ and $\ell_0$ depends only on $C>1$. Then 
$$
f(\ell, n, \delta, 1) \leq \left | d' (\gamma_i \mid (\Gr_i V)) \right |_{\ell}^{-C}
$$ 
for $(\ell, n, \delta) \notin S$ and 
$$
f(\ell, n, \delta, 1) \leq A
$$ 
for $(\ell, n, \delta) \in S$, so
$$
N_{\ell i} \leq f(\ell, n, \delta, 1) \leq A \left | d' (\gamma_i \mid (\Gr_i V)) \right |_{\ell}^{-C}.
$$
For $\ell > \ell_0$, $(\ell, n, \delta) \notin S$ so $N_{\ell i} \leq \left | d' (\gamma_i \mid (\Gr_i V)) \right |_{\ell}^{-C}$. 
\end{proof}

\begin{corollary} \label{cor32}
$N \leq p^{2g^2(1+o(1))}$. 
\end{corollary}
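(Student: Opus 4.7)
The plan combines three ingredients: the local bound on $N_\ell$ from Proposition \ref{prop31}, the archimedean bound $\left| d'(A_0) \right|_\infty \leq p^{2g^2(1+o(1))}$ established in (\ref{eq12}), and the product formula $\prod_\ell |n|_\ell = |n|_\infty^{-1}$ for a nonzero integer $n$, applied to $n = d'(A_0) \in \Z$. Fix $C > 1$ and let $A = A(C)$, $\ell_0 = \ell_0(C)$ be as in Proposition \ref{prop31}, and let $k(\ell_0)$ denote the number of primes $\ell \leq \ell_0$. Using the bound $N_\ell \leq A^{2g} \left| d'(A_0) \right|_\ell^{-C}$ for the finitely many $\ell \leq \ell_0$ and the sharper bound $N_\ell \leq \left| d'(A_0) \right|_\ell^{-C}$ for $\ell > \ell_0$, multiplication over all primes gives
$$
N = \prod_\ell N_\ell \leq A^{2g\, k(\ell_0)} \prod_\ell \left| d'(A_0) \right|_\ell^{-C} = A^{2g\, k(\ell_0)} \left| d'(A_0) \right|_\infty^C.
$$

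Substituting the archimedean bound (\ref{eq12}) yields $N \leq A^{2g\, k(\ell_0)} p^{2Cg^2(1+o(1))}$. Since $A$, $\ell_0$, and $C$ are fixed, $A^{2g\, k(\ell_0)} = p^{O_C(g)} = p^{o(g^2)}$, and therefore $N \leq p^{2Cg^2(1+o(1))}$ as $g \to \infty$ for every fixed $C > 1$. In particular, $\limsup_{g \to \infty} \frac{\log N}{g^2 \log p} \leq 2C$ for each $C > 1$, and letting $C \to 1^+$ gives $\limsup_{g \to \infty} \frac{\log N}{g^2 \log p} \leq 2$, which is equivalent to $N \leq p^{2g^2(1+o(1))}$. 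The only technical subtlety is that the constants $A$ and $\ell_0$ may deteriorate as $C \to 1^+$; but because they contribute only an exponential-in-$g$ factor while the main term is exponential in $g^2$, this deterioration is absorbed into the $o(1)$ for each fixed $C$. I expect no substantive obstacle beyond this bookkeeping.
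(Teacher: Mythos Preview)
Your argument is correct and follows essentially the same route as the paper: apply Proposition~\ref{prop31} to bound each $N_\ell$, multiply over all primes using the product formula $\prod_\ell |d'(A_0)|_\ell^{-1} = |d'(A_0)|_\infty$, insert the archimedean bound~(\ref{eq12}), and finally let $C \to 1^+$. The only cosmetic difference is that the paper bounds the number of primes $\ell \leq \ell_0$ crudely by $\ell_0$ itself rather than by your $k(\ell_0)$, which is immaterial since either way the contribution is $p^{o(g^2)}$.
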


\begin{proof}
By proposition \ref{prop31} and (\ref{eq12}),
\begin{equation*}
\begin{split}
N & = \prod_{\ell} N_{\ell}   \\
& \leq \prod_{\ell \leq \ell_0} A^{2g} \left | d'(A_0) \right |_{\ell}^{-C} \cdot \prod_{\ell > \ell_0} \left | d'(A_0) \right |_{\ell}^{-C}  \\
& \leq A^{2g \ell_0} \left | d'(A_0) \right |_{\infty}^C  \\
& \leq p^{2Cg^2(1+o(1))}
\end{split}
\end{equation*}
for every $C>1$. Thus $N \leq p^{2g^2(1+o(1))}$. 
\end{proof}

Now we can prove the main result of this section. 

\begin{theorem} \label{thm34}
\begin{equation} \label{eq16}
B(p,g) \leq p^{\frac{45}{4} g^2 (1+o(1))}. 
\end{equation}
\end{theorem}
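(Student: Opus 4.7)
The plan is to re-run the chain of inequalities that led to \eqref{eq11} in Section \ref{Sub31}, substituting in the improved estimates from the preceding two subsections. Recall that the previous bound \eqref{eq10} was obtained as a product of three pieces:
\begin{enumerate}
\item a bound on $N = |\mathbf{G}(\A^{\text{fin}}) \setminus X_p \times X^p|$;
\item a bound on $\#(K / \Stab_{\mathbf{G}(\A^{\text{fin}})}(\mathbb{L}))$, controlled via $|d'(A_0)|_\infty^4$; and
\item a bound on $\prod_{i=1}^{m} \Cl(\mathcal{O}_{K_i})$.
\end{enumerate}

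First I would replace piece (1) by Corollary \ref{cor32}, which gives $N \leq p^{2g^2(1+o(1))}$ in place of the old $p^{8g^2(1+o(1))}$ coming from the crude estimate $(2\sqrt{p})^{8\binom{2g}{2}}$. Next, for piece (2), I would combine \eqref{eq15} with the refined bound \eqref{eq12}, namely $|d'(A_0)|_\infty \leq (2g)^{2g} p^{\binom{2g}{2}} = p^{2g^2(1+o(1))}$ coming from Lemma \ref{lem3a} applied to the normalized Frobenius eigenvalues on the unit circle. This yields
\[
\#\bigl(K / \Stab_{\mathbf{G}(\A^{\text{fin}})}(\mathbb{L})\bigr) \leq |d'(A_0)|_\infty^{4} \exp(o(g^2)) \leq p^{8g^2(1+o(1))}.
\]
Finally, piece (3) is handled by the improved estimate \eqref{eq13}, giving $\prod_{i=1}^{m} \Cl(\mathcal{O}_{K_i}) \leq p^{g^2(1+o(1))}$.

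Multiplying the three improved pieces produces
\[
|\Isog(A_0)| \leq p^{2g^2(1+o(1))} \cdot p^{8g^2(1+o(1))} \cdot p^{g^2(1+o(1))} = p^{11 g^2(1+o(1))},
\]
uniformly in the isogeny class. Combining with the Honda--Tate count \eqref{eq1}, $m_p(g) = p^{\frac{1}{4}g^2(1+o(1))}$, gives
\[
B(p,g) \leq m_p(g) \cdot \max_{A_0} |\Isog(A_0)| \leq p^{(\tfrac{1}{4}+11)g^2(1+o(1))} = p^{\frac{45}{4}g^2(1+o(1))},
\]
which is the desired bound. There is no real obstacle here: the theorem is essentially a bookkeeping consequence of Proposition \ref{prop31}, Corollary \ref{cor32}, and the sharpening of $|d'(A_0)|_\infty$ via Lemma \ref{lem3a}, all of which have already been established. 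The only care needed is to make sure the $o(g^2)$ error terms (coming from the $(2g)^{2g}$, $\exp(o(g^2))$, $A^{2g\ell_0}$, and analogous factors) are absorbed uniformly, but each of these is of the form $\exp(O(g\log g))$ and so contributes only to $(1+o(1))$ in the exponent of $p$.
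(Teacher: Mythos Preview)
Your proposal is correct and follows exactly the same route as the paper: multiply the three improved pieces---$N\le p^{2g^2(1+o(1))}$ from Corollary~\ref{cor32}, $|d'(A_0)|_\infty^4\exp(o(g^2))\le p^{8g^2(1+o(1))}$ from \eqref{eq15} and \eqref{eq12}, and $\prod_i \Cl(\mathcal{O}_{K_i})\le p^{g^2(1+o(1))}$ from \eqref{eq13}---and then multiply by $m_p(g)=p^{\frac14 g^2(1+o(1))}$ to obtain $\tfrac14+2+8+1=\tfrac{45}{4}$. The paper's proof is essentially the one-line computation you wrote out in detail.
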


\begin{proof}
By the inequalities (\ref{eq15}), (\ref{eq12}) and (\ref{eq13}) with Corollary \ref{cor32}, we obtain
\begin{equation*}
\begin{split}
B(p,g) & \leq m_p(g)  \cdot N \cdot \left | d'(A_0) \right |_{\infty}^4 \exp{o(g^2)} \cdot p^{g^2(1+o(1))} \\
& \leq p^{(\frac{1}{4} + 2 + 8 + 1)g^2(1+o(1))} \\
& \leq p^{\frac{45}{4}g^2(1+o(1))}. 
\end{split}
\end{equation*}
\end{proof}


\section{Lower bound of $B(p,g)$} \label{Sec4}

Let $A_0$ be a simple abelian variety over $\F_p$ of dimension $g \geq 3$ (so it is associated to the Weil $p$-number $\pi \neq \sqrt{p}$). Then 
$$E := \End^0(A_0) = \Q (\pi)$$ 
is a CM-field of degree $2g$. Denote the maximal totally real subfield of $E$ by $E^+$. Let 
$$R := \Z [\pi, \overline{\pi}] = \Z [\pi, p \pi^{-1}]$$ 
and 
$$R^+ := \Z [\pi + \overline{\pi}].$$ 
Then $R$ and $R^+$ are orders in $E$ and $E^+$, respectively. Corollary \ref{cor22} says that
$$
\left | \Isog (\pi) \right | = \left | \Isog (A_0) \right | \geq h(R),
$$
so we need a lower bound of $h(R)$ for sufficiently many Weil $p$-numbers $\pi$ to give a lower bound of $B^{\text{sim}}(p,g)$ (and consequently, $B(p,g)$).

\subsection{Discriminant of $R$} \label{Sub41}

Roughly speaking, we prove that the absolute value of the discriminant of $R$ is sufficiently large for sufficiently many $\pi$ in this section. (See Corollary \ref{cor46} for a precise statement.) First we recall some notations and results in \cite[Section 2]{Lee}. For integers $a_1, \cdots, a_g$, let $F(a_1, \cdots, a_g)$ be a polynomial in $x$ defined by
$$
F(a_1, \cdots, a_g) := (x^{2g}+p^g)+a_1(x^{2g-1}+p^{g-1}x) + \cdots + a_{g-1}(x^{g+1}+px^{g-1})+a_gx^g.
$$
Let
\begin{equation*} 
\begin{split}
Y_{g}^1 & := \left \{ (a_1, \cdots, a_{g-1}) \in \Z^{g-1} : \left | \frac{a_i}{p^{i/2}} \right | \leq \frac{1}{g} \,\, (1 \leq i \leq g-1) \right \} \\
Y_g^2 & :=  \left \{ a_g \in \Z : \left | \frac{a_g}{2p^{g/2}} \right | \leq \frac{1}{g} \text{ and } \text{gcd} (a_g, p)=1 \right \} \\
Y_g & := Y_g^1 \times Y_g^2 \\
& =  \left \{ (a_1, \cdots, a_g) \in \Z^g : \left | \frac{a_g}{2p^{g/2}} \right | \leq \frac{1}{g}, \, \left | \frac{a_i}{p^{i/2}} \right | \leq \frac{1}{g} \,\, (1 \leq i \leq g-1) \text{ and } \text{gcd} (a_g, p)=1 \right \}.
\end{split}
\end{equation*}
Then by \cite[Lemma 2.1]{Lee}, $F(a_1, \cdots, a_g)$ is a Weil $p$-polynomial for any $(a_1, \cdots, a_g) \in Y_g$. This enables us to define
$$
Y_g^{\text{sim}} :=  \left \{ (a_1, \cdots, a_g) \in Y_g : F(a_1, \cdots, a_g) \text{ corresponds to a simple variety} \right \}.
$$
Then clearly 
\begin{equation} \label{eq4x}
\left | Y_g \right |=p^{\frac{1}{4}g^2(1+o(1))}
\end{equation}
and by the proof of \cite[Theorem 2.3]{Lee},
\begin{equation} \label{eq4a}
\lim_{g \rightarrow \infty}\frac{\left | Y_g^{\text{sim}} \right |}{\left | Y_g \right |} = 1. 
\end{equation}
When $(a_1, \cdots, a_g) \in Y_g^{\text{sim}}$ and $F(a_1, \cdots, a_g)$ is associated to $\pi$, then $F(a_1, \cdots, a_g)$ is the minimal polynomial of $\pi$ so 
\begin{equation} \label{eq4b}
\begin{split}
\left | \disc (R) \right | & = [R : \Z [\pi]]^{-2} \cdot \left | \disc (\Z [\pi]) \right | \\
& = [R : \Z [\pi]]^{-2} \cdot \left | \disc (F(a_1, \cdots, a_g)) \right |.
\end{split}
\end{equation}


\begin{lemma} \label{lem41}
$$
[R : \Z [\pi]] \leq p^{\frac{g(g-1)}{2}}.
$$
\end{lemma}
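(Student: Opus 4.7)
The plan is to filter $R$ by the ascending $\Z$-submodules
\[
R_k := \Z[\pi] + \Z\overline{\pi} + \cdots + \Z\overline{\pi}^k, \qquad k \geq 0,
\]
(so $R_0 = \Z[\pi]$), and to bound each successive index $[R_k:R_{k-1}]$. Since $\pi\overline{\pi} = p$, any monomial $\pi^i\overline{\pi}^j$ reduces to an integer multiple of $\pi^{i-j}$ or $\overline{\pi}^{j-i}$, so $R$ is generated as a $\Z$-module by $\Z[\pi]$ together with the $\overline{\pi}^k$, and the filtration exhausts $R$.

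The main computation exploits the palindromic form of the minimal polynomial $f(x) = F(a_1, \ldots, a_g)$ of $\pi$. Writing $f(x) = \sum_{i=0}^{2g} c_i x^i$, the explicit definition of $F$ gives $c_i = p^{g-i}a_i$ for $0 \leq i \leq g$ (with $a_0 := 1$) and $c_i = a_{2g-i}$ for $g \leq i \leq 2g$. From $f(\pi) = 0$ one has $p^g = -\sum_{i=1}^{2g} c_i \pi^i$, and dividing by $\pi^k$ (using $\pi^{-m} = \overline{\pi}^m/p^m$) yields
\[
p^{g-k}\,\overline{\pi}^{k} \;=\; -\sum_{i=k}^{2g} c_i\,\pi^{i-k} \;-\; \sum_{i=1}^{k-1}\frac{c_i}{p^{k-i}}\,\overline{\pi}^{k-i}.
\]
For $1 \leq k \leq g$ and $1 \leq i \leq k-1$ the coefficient $c_i/p^{k-i} = p^{g-k}a_i$ is an integer, so the right-hand side lies in $R_{k-1}$. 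Hence $[R_k:R_{k-1}]$ divides $p^{g-k}$ for every $1 \leq k \leq g$.

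Specialising to $k = g$ gives $\overline{\pi}^g \in R_{g-1}$; multiplying this relation by successive powers of $\overline{\pi}$ and reducing via $\pi\overline{\pi} = p$ shows inductively that $\overline{\pi}^k \in R_{g-1}$ for every $k \geq g$, so $R = R_{g-1}$. Combining the bounds,
\[
[R:\Z[\pi]] \;=\; \prod_{k=1}^{g-1}[R_k:R_{k-1}] \;\leq\; \prod_{k=1}^{g-1}p^{g-k} \;=\; p^{g(g-1)/2},
\]
as required. I do not expect a serious obstacle: the crucial observation is that the Weil-polynomial palindromicity $c_i = p^{g-i}c_{2g-i}$ (which encodes $\pi\overline{\pi} = p$ at the level of the minimal polynomial) forces the divisibility $p^{k-i} \mid c_i$ for $1 \leq i \leq k-1 \leq g-1$, and once this is in hand the index estimate is essentially immediate from the telescoping product.
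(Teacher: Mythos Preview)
Your proof is correct and follows essentially the same approach as the paper. Both arguments derive the identical relation $p^{g-k}(\overline{\pi}^k + a_1\overline{\pi}^{k-1} + \cdots + a_{k-1}\overline{\pi}) \in \Z[\pi]$ from the palindromic shape of $F(a_1,\ldots,a_g)$ (you by dividing $f(\pi)=0$ by $\pi^k$, the paper by multiplying $f(\pi)/\pi^g=0$ by $\pi^{g-k}$); the paper then reads off the index from an explicit change of $\Z$-basis, while you package the same information as a filtration $\Z[\pi]=R_0\subset R_1\subset\cdots\subset R_{g-1}=R$ with $[R_k:R_{k-1}]\mid p^{g-k}$.
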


\begin{proof}
$R$ has a $\Z$-basis $1, \pi, \cdots, \pi^{g-1}, \overline{\pi}, \cdots, \overline{\pi}^g$. So
$$
1, \pi, \cdots, \pi^{g-1}, \overline{\pi}, \overline{\pi}^2+a_1 \overline{\pi}, \cdots, \overline{\pi}^g + a_1 \overline{\pi}^{g-1} + \cdots + a_{g-1} \overline{\pi}
$$
is also a $\Z$-basis of $R$. Now the following relations finish the proof. 
\begin{equation*}
\begin{split}
p^{g-1}\overline{\pi} & = \pi^{g-1}\overline{\pi}^g = \pi^{g-1}(-\pi^g -a_1(\pi^{g-1}+\overline{\pi}^{g-1})- \cdots -a_g) \in \Z[\pi] \\
p^{g-2}(\overline{\pi}^2 +a_1 \overline{\pi}) & = \pi^{g-2}(-\pi^g -a_1 \pi^{g-1} -a_2 (\pi^{g-2}+\overline{\pi}^{g-2})- \cdots -a_g) \in \Z[\pi] \\
& \:\: \vdots \\
\overline{\pi}^g + a_1 \overline{\pi}^{g-1} + \cdots + a_{g-1}  \overline{\pi} &= -\pi^g - a_1 \pi^{g-1} - \cdots - a_g \in \Z[\pi].
\end{split}
\end{equation*}
\end{proof}
Now we consider the lower bound of the absolute value of the discriminant of $F(a_1, \cdots, a_g)$. For a fixed $(a_1, \cdots, a_{g-1}) \in Y_g^1$, the discriminant of $F(a_1, \cdots, a_g)$ can be understood as a polynomial in $a_g$.


\begin{lemma} \label{lem42}
For a polynomial $f(X)= a_0 X^{2g} + a_1 X^{2g-1} + \cdots + a_{2g}$, the discriminant of $f$ is given by
\begin{equation} 
\disc (f) = (g^{2g}a_0^{g-1}a_{2g}^{g-1}) a_g^{2g} + (\text{lower-order terms in $a_g$})
\end{equation}
as a polynomial in $a_g$. 
\end{lemma}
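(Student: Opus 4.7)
The plan is to regard $\disc(f)$ as a polynomial in $a_g$ with coefficients in $\Z[a_0,\dots,a_{g-1},a_{g+1},\dots,a_{2g}]$, and to pin down both its degree in $a_g$ and its leading coefficient by an asymptotic analysis as $|a_g|\to\infty$. The degree bound $\deg_{a_g}\disc(f)\leq 2g$ follows from inspecting the Sylvester matrix of $(f,f')$: the only entries involving $a_g$ are a single $a_g$ in each of the $2g-1$ rows coming from $f$ (at column $g+i$) and a single $g\cdot a_g$ in each of the $2g$ rows coming from $f'$ (at column $g+j$). All such entries lie in the $2g$ columns indexed $g+1,\dots,3g$, so no permutation in the expansion of the determinant can harvest more than $2g$ of them, whence $\deg_{a_g}\operatorname{Res}(f,f')\leq 2g$ and therefore $\deg_{a_g}\disc(f)\leq 2g$ (using $\operatorname{Res}(f,f')=\pm a_0\disc(f)$).

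To identify the leading coefficient, I would specialize $a_0,\dots,a_{g-1},a_{g+1},\dots,a_{2g}$ to complex numbers with $a_0, a_{2g}\neq 0$ and let $|a_g|\to\infty$. The Newton polygon of $f$ with respect to the $a_g$-adic valuation has vertices $(0,0),(g,-1),(2g,0)$ and slopes $\pm 1/g$, so the $2g$ roots split into $g$ large roots $\alpha_k = (-a_g/a_0)^{1/g}\zeta_g^k\bigl(1+O(|a_g|^{-1/g})\bigr)$ and $g$ small roots $\beta_l = (-a_{2g}/a_g)^{1/g}\zeta_g^l\bigl(1+O(|a_g|^{-1/g})\bigr)$, where $\zeta_g := e^{2\pi i/g}$. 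Substituting these into $\disc(f) = a_0^{4g-2}\prod_{i<j}(\gamma_i-\gamma_j)^2$ and splitting into the $\alpha$-$\alpha$, $\beta$-$\beta$, and $\alpha$-$\beta$ parts, the three pieces behave to leading order as
$$(-a_g/a_0)^{g-1}\Delta_g, \qquad (-a_{2g}/a_g)^{g-1}\Delta_g, \qquad \Bigl(\prod_k\alpha_k\Bigr)^{2g}\sim (a_g/a_0)^{2g},$$
with $\Delta_g := \prod_{k<l}(\zeta_g^k-\zeta_g^l)^2 = \disc(X^g-1)$ satisfying $\Delta_g^2 = g^{2g}$. The two signs $(-1)^{g-1}$ square away, $\Delta_g$ enters only squared, and the $a_0$-powers collapse: the product simplifies to $g^{2g}a_0^{g-1}a_{2g}^{g-1}a_g^{2g}$ after multiplying by $a_0^{4g-2}$.

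Since the asymptotic equivalence $\disc(f)\sim g^{2g}a_0^{g-1}a_{2g}^{g-1}a_g^{2g}$ holds for every specialization of the other coefficients with $a_0 a_{2g}\neq 0$, the coefficient of $a_g^{2g}$ in $\disc(f)$, a polynomial in $a_0,\dots,a_{g-1},a_{g+1},\dots,a_{2g}$, must agree with $g^{2g}a_0^{g-1}a_{2g}^{g-1}$ on a Zariski-dense open set, hence identically; combined with the degree bound, this yields the lemma. The main obstacle is the bookkeeping required to verify that the error $O(|a_g|^{-1/g})$ in each root, after propagation through the $\binom{2g}{2}$ pairwise-difference factors, still produces a correction strictly of lower order in $a_g$ than the main term; this is routine Newton-polygon bookkeeping because the errors are uniform in the specialization.
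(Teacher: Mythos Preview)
Your argument is correct and genuinely different from the paper's. Both proofs share the easy degree bound $\deg_{a_g}\disc(f)\le 2g$ from the Sylvester matrix (the $a_g$-entries sit in only $2g$ columns), but they diverge in identifying the leading coefficient. The paper stays entirely inside the Sylvester matrix: it chases through the permutation expansion of $\det A$ and shows, by a chain of forced choices on rows and columns, that every term contributing to $a_g^{2g}$ factors through $2\times 2$ blocks $\begin{smallmatrix} a_0 & a_g \\ 2ga_0 & ga_g \end{smallmatrix}$ together with a string of $ga_g$'s and $a_{2g}$'s, yielding $(-ga_0a_g)^g(ga_g)^g a_{2g}^{g-1}$. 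Your approach instead specializes the other coefficients, lets $|a_g|\to\infty$, reads off the $g$ large and $g$ small roots from the Newton polygon, and computes the Vandermonde product asymptotically; the leading coefficient then falls out of $\Delta_g^2=g^{2g}$ and elementary bookkeeping of the $a_0$-powers.

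The paper's route is completely elementary and algebraic---no limits, no branches of $g$th roots, no error propagation---at the cost of somewhat tedious index-tracking in a $(4g-1)\times(4g-1)$ matrix. Your route is more conceptual and would adapt easily to variants (e.g.\ identifying the leading term in some other coefficient), but it carries the analytic overhead you flag at the end: one must check that the $O(|a_g|^{-1/g})$ relative errors in the $O(g^2)$ pairwise differences combine to a total relative error $o(1)$, which they do since $g^2|a_g|^{-1/g}\to 0$. Both are perfectly valid; yours is arguably cleaner once one is comfortable with Newton-polygon asymptotics.
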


\begin{proof}
$$
\disc (f) = (-1)^{\frac{2g(2g-1)}{2}} a_0^{-1} R(f,f') = (-1)^g a_0^{-1} \det A
$$
where $A$ is a $(4g-1) \times (4g-1)$ matrix given by
$$
A = 
\begin{bmatrix}
a_0 & a_1 & a_2 & \cdots & 0 & 0 & 0\\ 
0 & a_0 & a_1 & \cdots & 0 & 0 & 0\\ 
\vdots  & \vdots  & \vdots  &  & \vdots  & \vdots  & \vdots \\ 
0 & 0 & 0 & \cdots  & a_{2g-1} & a_{2g} & 0\\ 
0 & 0 & 0 & \cdots  & a_{2g-2} & a_{2g-1} & a_{2g}\\ 
2ga_0 &(2g-1)a_1 & (2g-2)a_2 & \cdots & 0 & 0 & 0\\ 
0 & 2ga_0 & (2g-1)a_1 & \cdots & 0 & 0 & 0\\ 
\vdots  & \vdots  & \vdots  &  & \vdots  & \vdots  & \vdots \\ 
0 & 0 & 0 & \cdots  & 2a_{2g-2} & a_{2g-1} & 0\\ 
0 & 0 & 0 & \cdots  & 3a_{2g-3} & 2a_{2g-2} & a_{2g-1}
\end{bmatrix}
$$
There are exactly $2g$ columns of $A$ having $a_g$ or $ga_g$. Thus $\disc (f)$, as a polynomial in $a_g$, has degree at most $2g$. Now we have to compute the coefficient of $a_g^{2g}$. Suppose that we choose $4g-1$ entries of $A$ from different rows and columns so that $2g$ of them are $a_g$ or $ga_g$. 
\begin{itemize}

\item $A_{4g-1, 3g}=ga_g$ should be chosen, because it is the only entry of the $3g$-th column of $A$ which is $a_g$ or $ga_g$. 

\item $A_{4g-1, 4g-1}$ cannot be chosen, so $A_{2g-1, 4g-1}=a_{2g}$ should be chosen. 

\item $A_{2g-1, 3g-1}=a_g$ cannot be chosen, so $A_{4g-2, 3g-1}=ga_g$ should be chosen. 

\item $A_{2g-1, 4g-2}$, $A_{4g-2, 4g-2}$ and $A_{4g-1, 4g-2}$ cannot be chosen, so $A_{2g-2, 4g-2}=a_{2g}$ should be chosen. 

\end{itemize}

Iterating this process, we choose
$$
A_{4g-i, 3g+1-i}=ga_g \,\, (1 \leq i \leq g)
$$
and 
$$
A_{2g-i, 4g-i}=a_{2g} \,\, (1 \leq i \leq g-1).
$$
\begin{itemize}

\item One of $A_{1, 1}=a_0$ and $A_{2g, 1}=2g a_0$ should be chosen. Also one of $A_{1, g+1}=a_g$ and $A_{2g, g+1}=ga_g$ should be chosen. 

\item Now $A_{1, 2}$ and $A_{2g, 2}$ cannot be chosen, so one of $A_{2,2}=a_0$ and $A_{2g+1, 2}=2ga_0$ should be chosen. Also one of $A_{2, g+2}=a_g$ and $A_{2g+1, g+2}=ga_g$ should be chosen.

\end{itemize}

Iterating this process, we choose exactly one of
$$
\left \{ A_{i,i} = a_0, \, A_{2g-1+i,i} = 2g a_0 \right \}
$$
and 
$$
\left \{ A_{2g-1+i, g+i}=ga_g, \, A_{i, g+i}=a_g \right \}
$$
for each $1 \leq i \leq g$. Now it is easy to show that
\begin{equation*}
\begin{split}
\det A & = \prod_{i=1}^{g} \begin{vmatrix}
A_{i,i} & A_{i, g+i}\\ 
A_{2g-1+i, i} & A_{2g-1+i, g+i}
\end{vmatrix} \cdot \prod_{i=1}^{g} A_{4g-i, 3g+1-i} \cdot \prod_{i=1}^{g} A_{2g-i, 4g-i} + (\text{lower-order terms in $a_g$}) \\
& = (-ga_0a_g)^g(ga_g)^ga_{2g}^{g-1} + (\text{lower-order terms in $a_g$}) \\
& = ((-1)^g g^{2g} a_0^g a_{2g}^{g-1}) a_g^{2g} + (\text{lower-order terms in $a_g$}).
\end{split}
\end{equation*}
This finishes the proof.
\end{proof}


\begin{corollary} \label{cor43}
\begin{equation} \label{eq4d}
\disc (F(a_1, \cdots, a_g)) = g^{2g} p^{g(g-1)} a_g^{2g} + (\text{lower-order terms in $a_g$})
\end{equation}
as a polynomial in $a_g$. 
\end{corollary}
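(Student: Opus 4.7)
The plan is to obtain Corollary \ref{cor43} as an immediate specialization of Lemma \ref{lem42}. First I would rewrite $F(a_1,\dots,a_g)$ in the generic form used in the lemma, namely as
$$
F(a_1,\dots,a_g) = b_0 x^{2g} + b_1 x^{2g-1} + \cdots + b_{2g-1} x + b_{2g},
$$
read off the three coefficients that actually appear in the leading-in-$a_g$ term of the lemma's discriminant formula, and then substitute.

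From the defining expression
$$
F(a_1,\dots,a_g) = (x^{2g}+p^g) + a_1(x^{2g-1}+p^{g-1}x) + \cdots + a_{g-1}(x^{g+1}+px^{g-1}) + a_g x^g,
$$
I would read off $b_0 = 1$, $b_{2g} = p^g$, and $b_g = a_g$ (this last point just uses that $g$ is the middle index of a degree-$2g$ polynomial and that the only term contributing to $x^g$ is $a_g x^g$). Note that the remaining coefficients $b_1,\dots,b_{g-1},b_{g+1},\dots,b_{2g-1}$ depend only on $a_1,\dots,a_{g-1}$ and $p$, so when we view $\disc(F)$ as a polynomial in $a_g$ with $(a_1,\dots,a_{g-1})$ fixed, only $b_g$ varies with $a_g$, matching the setup of Lemma \ref{lem42}.

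Applying Lemma \ref{lem42} then gives
$$
\disc(F(a_1,\dots,a_g)) = g^{2g} b_0^{g-1} b_{2g}^{g-1} a_g^{2g} + (\text{lower-order terms in } a_g),
$$
and substituting $b_0 = 1$ and $b_{2g} = p^g$ yields
$$
\disc(F(a_1,\dots,a_g)) = g^{2g} p^{g(g-1)} a_g^{2g} + (\text{lower-order terms in } a_g),
$$
which is exactly the claim. Since the proof is a one-line substitution, there is no real obstacle; the only thing to double-check is that $a_g$ in the statement of Corollary \ref{cor43} genuinely coincides with the coefficient $b_g$ required by Lemma \ref{lem42}, which is immediate from the definition of $F$.
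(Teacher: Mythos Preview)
Your proposal is correct and is exactly the approach the paper intends: the paper states Corollary \ref{cor43} immediately after Lemma \ref{lem42} without a separate proof, so it is meant to follow by the very substitution $b_0=1$, $b_{2g}=p^g$, $b_g=a_g$ that you carry out. Your observation that the remaining coefficients $b_1,\dots,b_{g-1},b_{g+1},\dots,b_{2g-1}$ are independent of $a_g$ is the right justification that the ``lower-order terms in $a_g$'' from the lemma remain lower-order after specialization.
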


Suppose that $g$ is sufficiently large. 
By the equation (\ref{eq4a}), $\left | \disc (F(a_1, \cdots, a_g)) \right |$ is large enough for sufficiently many $(a_1, \cdots, a_g) \in Y_g^{\text{sim}}$ if it is large enough for sufficiently many $(a_1, \cdots, a_g) \in Y_g$. This should be true if for any $(a_1, \cdots, a_{g-1}) \in Y_g^1$, $\left | \disc (F(a_1, \cdots, a_g)) \right |$ is large enough for sufficiently many $a_g \in Y_g^2$. Since $\disc (F(a_1, \cdots, a_g))$ can be written as equation (\ref{eq4d}), the following proposition is natural. 


\begin{proposition} \label{prop44}
For any $\varepsilon >0$, there exists $g_0=g_0(\varepsilon)>0$ such that for every $g \geq g_0$ and a monic polynomial $f$ of degree $2g$, 
$$
\mu \left ( \left \{ x \in \left [ -\frac{2p^{\frac{g}{2}}}{g}, \frac{2p^{\frac{g}{2}}}{g} \right ] : \left | f(x) \right | \leq p^{g^2(1-\varepsilon)} \right \} \right ) \leq \frac{p^{\frac{g}{2}}}{g}
$$
where $\mu (E)$ denotes the Lebesgue measure of $E$. 
\end{proposition}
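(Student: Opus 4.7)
The plan is to drop the restriction to the interval $\left[-\frac{2p^{g/2}}{g}, \frac{2p^{g/2}}{g}\right]$ entirely and instead bound the measure of the full real sublevel set
$$
S_M := \left\{ x \in \R : |f(x)| \leq M \right\}, \qquad M := p^{g^2(1-\varepsilon)}.
$$
The central ingredient is the classical Chebyshev/capacity estimate: for any monic polynomial $f$ of degree $n$ and any $M > 0$,
$$
\mu(S_M) \leq 4 M^{1/n}.
$$
Granting this, substituting $n = 2g$ gives $\mu(S_M) \leq 4 p^{g(1-\varepsilon)/2}$, and the desired inequality $4 p^{g(1-\varepsilon)/2} \leq p^{g/2}/g$ rearranges to $4g \leq p^{g\varepsilon/2}$, which holds for all $g \geq g_0(\varepsilon)$ since the right-hand side grows exponentially in $g$ while the left-hand side is linear.

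To prove the key estimate $\mu(S_M) \leq 4 M^{1/n}$, I would combine two standard facts from potential theory. First, by the identification of the Chebyshev constant of a compact set with its logarithmic capacity, the fact that $f$ is monic of degree $n$ with $\sup_{S_M}|f| \leq M$ yields $\operatorname{cap}(S_M) \leq M^{1/n}$. Second, for any compact $E \subset \R$ one has the inequality $\operatorname{cap}(E) \geq \mu(E)/4$ (a classical inequality of P\'olya), with equality when $E$ is an interval. Combining the two gives $\mu(S_M)/4 \leq \operatorname{cap}(S_M) \leq M^{1/n}$, as required.

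The main obstacle is justifying the capacity--measure inequality $\operatorname{cap}(E) \geq \mu(E)/4$, which is where the constant $4$ in the bound enters. If one prefers to avoid potential theory entirely, the bound $\mu(S_M) \leq 4 M^{1/n}$ can be obtained by a direct Chebyshev-polynomial comparison: normalizing so that $f/M$ has supremum norm at most $1$ on $S_M$, one compares $f/M$ to the rescaled Chebyshev polynomial $T_n$ on an interval whose length is $\mu(S_M)$, invoking the extremal property that $T_n$ minimizes the sup norm among all monic degree-$n$ polynomials scaled to that interval. The resulting comparison on the leading coefficient gives $M \geq 2^{1-n}(\mu(S_M)/2)^n \cdot (\text{const})$, yielding the stated $4M^{1/n}$ up to the optimal constant. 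Note that the estimate is sharp up to this constant, as equality is essentially attained by a rescaled Chebyshev polynomial on an interval, so the exponent $g(1-\varepsilon)/2$ in our final bound cannot be improved by this method.
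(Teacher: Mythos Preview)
Your argument is correct and takes a genuinely different route from the paper. The paper proves the bound from scratch by an iterative derivative argument: starting from the observation that $\int_{I_1}|f'|\,dx$ is controlled because $f$ is piecewise monotone, one extracts a large subset $I_2\subset I_1$ on which $|f'|$ is small, then repeats with $f',f'',\dots$ until reaching the constant $f^{(2g)}=(2g)!$; unwinding the chain of inequalities gives $\mu(I_1)\le 6g\,p^{g(1-\varepsilon)/2}$. Your approach bypasses all of this by quoting P\'olya's inequality $\operatorname{cap}(E)\ge \mu(E)/4$ for compact $E\subset\R$ together with the standard bound $\operatorname{cap}(S_M)\le M^{1/n}$, yielding the slightly sharper $\mu(S_M)\le 4\,p^{g(1-\varepsilon)/2}$ in one line. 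The paper's proof has the virtue of being entirely self-contained, whereas yours is shorter and more conceptual but rests on a classical external result; since the paper already invokes Fekete's transfinite diameter computation elsewhere (Lemma~\ref{lem49}), your use of potential theory is in the same spirit.

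Two small remarks. First, your phrase ``identification of the Chebyshev constant with the logarithmic capacity'' is a limit statement and does not by itself give $\operatorname{cap}(S_M)\le M^{1/n}$ from a single degree-$n$ witness; what you actually need is the pointwise inequality $\|p\|_E\ge\operatorname{cap}(E)^n$ for every monic $p$ of degree $n$, which is standard (e.g.\ via the equilibrium potential) and is what you are really using. Second, your alternative ``direct Chebyshev comparison'' paragraph is too sketchy to stand on its own, since $S_M$ need not be an interval and the reduction to the interval case is precisely the content of P\'olya's inequality; but as you already have the potential-theory route, this does not affect the validity of the proof.
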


\begin{proof}
Denote
$$
I_1 := \left \{ x \in \left [ -\frac{2p^{\frac{g}{2}}}{g}, \frac{2p^{\frac{g}{2}}}{g} \right ] : \left | f(x) \right | \leq p^{g^2(1-\varepsilon)} \right \}
$$
and assume that $\mu (I_1) >0$. Since the degree of $f$ is $2g$, there are
$$
-\frac{2p^{\frac{g}{2}}}{g}=x_0 < x_1 < \cdots < x_t = \frac{2p^{\frac{g}{2}}}{g} \:\: (1 \leq t \leq 2g)
$$
such that $f$ is strictly increasing or strictly decreasing on each interval $(x_i, x_{i+1})$ ($0 \leq i \leq t-1$). Since each $I_1 \cap [x_i, x_{i+1}]$ is a (possibly empty) interval, we have
$$
\int_{I_1}\left | f'(x) \right |dx = \sum_{i=0}^{t-1} \int_{I_1 \cap [x_i, x_{i+1}]}\left | f'(x) \right |dx \leq 2g \cdot 2p^{g^2(1-\varepsilon)}.
$$
Choose any $c_1 \in (0,1)$ and let
$$
I_2 := \left \{ x \in I_1 : \left | f'(x) \right | \leq \frac{2g \cdot 2p^{g^2(1-\varepsilon)}}{(1-c_1)\mu(I_1)} \right \}.
$$
Then $I_2$ is measurable and 
$$
\mu (I_2) \geq c_1 \mu (I_1).
$$
Repeating the same procedure for $f'$ and $I_2$, we have
$$
\int_{I_2}\left | f^{(2)}(x) \right |dx 
\leq \frac{2g(2g-1) \cdot 2^2p^{g^2(1-\varepsilon)}}{(1-c_1)\mu (I_1)}
$$
and for any $c_2 \in (0,1)$, there exists $I_3 \subset I_2$ such that
$$
\mu (I_3) \geq c_2 \mu (I_2) \geq c_1c_2 \mu (I_1)
$$
and
$$
\left | f^{(2)}(x) \right | \leq \frac{2g (2g-1) \cdot 2^2p^{g^2(1-\varepsilon)}}{(1-c_1)\mu(I_1) \cdot (1-c_2)\mu(I_2)}
$$
for any $x \in I_3$. Iterating this process, we get measurable subsets $I_{2g} \subset \cdots \subset I_2 \subset I_1$ such that
\begin{equation} \label{eqnew1}
\mu (I_{2g}) \geq c_{2g-1} \mu (I_{2g-1}) \geq \cdots \geq c_1c_2 \cdots c_{2g-1} \mu (I_1)
\end{equation}
and
\begin{equation} \label{eqnew2}
\begin{split}
(2g)! \mu (I_{2g}) & = \int _{I_{2g}} \left | f^{(2g)}(x) \right | dx   \\
& \leq 2^{2g}(2g)! \frac{p^{g^2(1-\varepsilon)}}{\prod_{i=1}^{2g-1}(1-c_i)\mu(I_i)} \\
& \leq 2^{2g}(2g)!p^{g^2(1-\varepsilon)} \cdot \frac{1}{\prod_{i=1}^{2g-1}(1-c_i)c_i^{2g-1-i} \mu(I_1)^{2g-1}}.
\end{split}
\end{equation}
for any $c_1, \cdots, c_{2g-1} \in (0,1)$ by the equation (\ref{eqnew1}). This gives
$$
\left ( \prod_{i=1}^{2g-1}(1-c_i)c_i^{2g-i} \right )\mu (I_1)^{2g} \leq 2^{2g} p^{g^2(1-\varepsilon)}. 
$$
If we choose $c_1, \cdots, c_{2g-1}$ by 
$$
c_1 = \cdots = c_{2g-1} = 1-\frac{1}{g},
$$
then
\begin{equation} \label{eqnew3}
\prod_{i=1}^{2g-1}(1-c_i)c_i^{2g-i} = \frac{1}{g^{2g-1}}(1-\frac{1}{g})^{g(2g-1)} >\frac{1}{(3g)^{2g-1}}
\end{equation}
for sufficiently large $g$. Now there exists $g_0>0$ (depends only on $\varepsilon$) such that
$$
\mu (I_1) \leq 6g p^{\frac{g}{2} (1-\varepsilon)} \leq \frac{p^{\frac{g}{2}}}{g}
$$
for any $g \geq g_0$. 
\end{proof}


For each $b = (b_1, \cdots, b_{g-1}) \in Y_g^1$, define
$$
P_{b}(X) := \disc (F(b_1, \cdots, b_{g-1}, X)).
$$
Then by Corollary \ref{cor43}, 
$$
P_{b}(X) = g^{2g}p^{g(g-1)}Q_{b}(X)
$$
for some monic polynomial $Q_{b}$ of degree $2g$. Applying Proposition \ref{prop44} to the polynomial $Q_{b}$, we obtain that for any $\varepsilon >0$, the set
$$
A_{b, \varepsilon} = \left \{ x \in  \left [ -\frac{2p^{\frac{g}{2}}}{g}, \frac{2p^{\frac{g}{2}}}{g} \right ] : \left | P_b(x) \right | \geq g^{2g} p^{2g^2-g-g^2 \varepsilon} \right \}
$$
satisfies
$$
\mu (A_{b, \varepsilon}) \geq \frac{3 p^{\frac{g}{2}}}{g}
$$
for sufficiently large $g$. For any $\varepsilon >0$, denote
\begin{equation*}
\begin{split}
S_{g, \varepsilon} & := \left \{ \mathbf{a} \in Y_g : \left | \disc (F(\mathbf{a})) \right | \geq g^{2g}p^{2g^2-g-g^2\varepsilon} \right \} \\
S_{g, \varepsilon}^{\text{sim}} & := S_{g, \varepsilon} \cap Y_g^{\text{sim}} = \left \{ \mathbf{a} \in Y_g^{\text{sim}} : \left | \disc (F(\mathbf{a})) \right | \geq g^{2g}p^{2g^2-g-g^2\varepsilon} \right \}.
\end{split}
\end{equation*}


\begin{theorem} \label{thm45}
For any $\varepsilon >0$, 
\begin{equation} \label{eq4f}
\left | S_{g, \varepsilon}^{\text{sim}} \right | = p^{\frac{1}{4}g^2(1+o(1))}.
\end{equation}
\end{theorem}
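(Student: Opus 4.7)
The upper bound is immediate: $S_{g,\varepsilon}^{\text{sim}} \subset Y_g$ and by (\ref{eq4x}), $|Y_g| = p^{\frac{1}{4}g^2(1+o(1))}$. The real work is the matching lower bound, for which the plan is to show that $|S_{g,\varepsilon}|$ is a constant fraction of $|Y_g|$ and then invoke (\ref{eq4a}) to transfer this to $S_{g,\varepsilon}^{\text{sim}}$.

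Fix $b = (b_1, \dots, b_{g-1}) \in Y_g^1$. The setup preceding the theorem tells us that $\mu(A_{b,\varepsilon}) \geq 3p^{g/2}/g$ for all sufficiently large $g$. To pass from this Lebesgue measure estimate to a count of integers, I would observe that the ``bad'' complement $\{x \in [-2p^{g/2}/g, 2p^{g/2}/g] : |Q_b(x)| < p^{g^2(1-\varepsilon)}\}$ is cut out by the polynomial inequalities $Q_b(x) \lessgtr \pm p^{g^2(1-\varepsilon)}$, each of degree $2g$, so it is a union of $O(g)$ intervals of total length at most $p^{g/2}/g$. Consequently the number of integers in the bad set is at most $p^{g/2}/g + O(g)$, and the number of integers lying in $A_{b,\varepsilon}$ is at least $3p^{g/2}/g - O(g)$. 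Removing the $\leq 4p^{g/2-1}/g + 1$ multiples of $p$ in the ambient interval, I would conclude that
$$\#\{a_g \in Y_g^2 : (b,a_g) \in S_{g,\varepsilon}\} \geq (3 - 4/p) \cdot p^{g/2}/g - O(g) \geq c \cdot |Y_g^2|$$
for some absolute constant $c > 0$ (depending only on $p$) and all sufficiently large $g$, since $|Y_g^2| = (4 p^{g/2}/g)(1+o(1))$.

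Summing over $b \in Y_g^1$ yields $|S_{g,\varepsilon}| \geq c |Y_g^1||Y_g^2| = c|Y_g|$. By (\ref{eq4a}), $|Y_g \setminus Y_g^{\text{sim}}| = o(|Y_g|)$, so
$$|S_{g,\varepsilon}^{\text{sim}}| \geq |S_{g,\varepsilon}| - |Y_g \setminus Y_g^{\text{sim}}| \geq c|Y_g| - o(|Y_g|) \geq \tfrac{c}{2} |Y_g|$$
for large $g$, giving $|S_{g,\varepsilon}^{\text{sim}}| \geq p^{\frac{1}{4}g^2(1+o(1))}$. Combined with the trivial upper bound, this proves (\ref{eq4f}).

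The only nontrivial step is the counting-from-measure argument, and the main point to keep track of is that both the ``loss'' from the piecewise structure of the bad sublevel set and the ``loss'' from discarding multiples of $p$ must fit inside the slack $3p^{g/2}/g$ provided by Proposition~\ref{prop44}. This works for every prime $p \geq 2$ because $3 - 4/p > 0$, so a positive constant fraction of $Y_g^2$ always survives.
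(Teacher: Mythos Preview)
Your proof is correct and follows essentially the same approach as the paper: both use Proposition~\ref{prop44} to bound the measure of the ``bad'' sublevel set, convert this to an integer count by exploiting that the bad (equivalently good) set is a union of $O(g)$ intervals, subtract the multiples of $p$, and then invoke (\ref{eq4a}) to pass from $S_{g,\varepsilon}$ to $S_{g,\varepsilon}^{\text{sim}}$. The only slip is the formula $|Y_g^2| = (4p^{g/2}/g)(1+o(1))$, which should carry a factor $(1-1/p)$ since $Y_g^2$ excludes multiples of $p$; this only helps your inequality and does not affect the argument.
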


\begin{proof}
As in the proof of Proposition \ref{prop44}, there are
$$
-\frac{2p^{\frac{g}{2}}}{g}=x_0 < x_1 < \cdots < x_t = \frac{2p^{\frac{g}{2}}}{g} \:\: (1 \leq t \leq 2g)
$$
such that $P_{b}$ is strictly increasing or strictly decreasing on each $(x_i, x_{i+1})$. Thus $A_{b, \varepsilon}$ is a disjoint union of closed intervals
$$
I_1, \cdots, I_s \:\: (s \leq t+1 \leq 2g+1)
$$
so
\begin{equation*} 
\begin{split}
\left | A_{b, \varepsilon} \cap (\Z -p \Z) \right | & = \left | A_{b, \varepsilon} \cap \Z \right | - \left | A_{b, \varepsilon} \cap p \Z \right |   \\
& \geq \sum_{i=1}^{s} \left | I_i \cap \Z \right | - \left | \left [ -\frac{2p^{\frac{g}{2}}}{g}, \frac{2p^{\frac{g}{2}}}{g} \right ] \cap p \Z \right |  \\
& \geq \sum_{i=1}^{s} (\mu (I_i)-1) - \left ( \frac{2p^{\frac{g}{2}}}{g} + 1 \right ) \\
& \geq \mu(A_{b, \varepsilon}) - (2g+1) - \left ( \frac{2p^{\frac{g}{2}}}{g} + 1 \right ) \\
& \geq \frac{p^{\frac{g}{2}}}{g} - (2g+2).
\end{split}
\end{equation*}
for sufficiently large $g$. Now
\begin{equation*} 
\begin{split}
\frac{\left | S_{g, \varepsilon} \right |}{\left | Y_g \right |} & \geq \min_{b \in Y_g^1} \frac{\left | \left \{ x \in Y_g^2 : (b,x) \in S_{g, \varepsilon} \right \} \right |}{\left | Y_g^2 \right |}  \\
& = \min_{b \in Y_g^1} \frac{\left | A_{b, \varepsilon} \cap (\Z - p \Z) \right |}{\left | Y_g^2 \right |}  \\
& \geq \frac{\frac{p^{\frac{g}{2}}}{g} - (2g+2)}{\frac{4p^{\frac{g}{2}}}{g}+1} \\
& > \frac{1}{5}
\end{split}
\end{equation*}
for sufficiently large $g$. Combining this with the equation (\ref{eq4a}), we have
$$
\frac{\left | S_{g, \varepsilon}^{\text{sim}} \right |}{\left | Y_g \right |} \geq \frac{\left | S_{g, \varepsilon} \right |}{\left | Y_g \right |} + \frac{\left | Y_g^{\text{sim}} \right |}{\left | Y_g \right |}-1 > \frac{1}{10}
$$
for sufficiently large $g$. The equation (\ref{eq4x}) finishes the proof. 
\end{proof}

For $\mathbf{a} \in Y_g^{\text{sim}}$, denote a Weil $p$-number which corresponds to the Weil $p$-polynomial $F(\mathbf{a})$ by $\pi_{\mathbf{a}}$ and let $R_{\mathbf{a}} := \Z [\pi_{\mathbf{a}}, \overline{\pi_{\mathbf{a}}} ]$. Define 
$$
T_{g, \varepsilon}^{\text{sim}} := \left \{ \mathbf{a} \in Y_g^{\text{sim}} : \left | \disc (R_{\mathbf{a}}) \right | \geq g^{2g}p^{g^2 (1 - \varepsilon)} \right \}.
$$
The following corollary comes from the equation (\ref{eq4b}), Lemma \ref{lem41} and Theorem \ref{thm45}. 


\begin{corollary} \label{cor46}
For any $\varepsilon >0$, 
\begin{equation} \label{eq4g}
\left | T_{g, \varepsilon}^{\text{sim}} \right | = p^{\frac{1}{4}g^2(1+o(1))}.
\end{equation}
\end{corollary}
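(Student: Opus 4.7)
The plan is to observe that this corollary is essentially a bookkeeping exercise that combines the three ingredients already proved: equation (\ref{eq4b}), Lemma \ref{lem41}, and Theorem \ref{thm45}. The strategy is to show the set-theoretic inclusion $S_{g,\varepsilon}^{\text{sim}} \subseteq T_{g,\varepsilon}^{\text{sim}}$, after which the lower bound on $|T_{g,\varepsilon}^{\text{sim}}|$ falls out of Theorem \ref{thm45} and the matching upper bound is automatic from $T_{g,\varepsilon}^{\text{sim}} \subseteq Y_g$ together with (\ref{eq4x}).

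First I would fix $\mathbf{a} = (a_1,\dots,a_g) \in S_{g,\varepsilon}^{\text{sim}}$ and chase the identity (\ref{eq4b}),
\[
|\disc(R_{\mathbf{a}})| = [R_{\mathbf{a}} : \Z[\pi_{\mathbf{a}}]]^{-2} \cdot |\disc(F(\mathbf{a}))|.
\]
Substituting the upper bound $[R_{\mathbf{a}} : \Z[\pi_{\mathbf{a}}]] \leq p^{g(g-1)/2}$ from Lemma \ref{lem41} into the denominator, and the lower bound $|\disc(F(\mathbf{a}))| \geq g^{2g}p^{2g^2-g-g^2\varepsilon}$ from the definition of $S_{g,\varepsilon}^{\text{sim}}$ into the numerator, one computes
\[
|\disc(R_{\mathbf{a}})| \;\geq\; p^{-g(g-1)}\cdot g^{2g}p^{2g^2-g-g^2\varepsilon} \;=\; g^{2g} p^{g^2(1-\varepsilon)},
\]
since $2g^2 - g - g(g-1) = g^2$. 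Hence $\mathbf{a} \in T_{g,\varepsilon}^{\text{sim}}$, proving the desired inclusion.

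Finally I would combine the two directions: Theorem \ref{thm45} gives
\[
|T_{g,\varepsilon}^{\text{sim}}| \;\geq\; |S_{g,\varepsilon}^{\text{sim}}| \;=\; p^{\frac{1}{4}g^2(1+o(1))},
\]
while the containment $T_{g,\varepsilon}^{\text{sim}} \subseteq Y_g^{\text{sim}} \subseteq Y_g$ together with (\ref{eq4x}) gives the reverse bound $|T_{g,\varepsilon}^{\text{sim}}| \leq p^{\frac{1}{4}g^2(1+o(1))}$. There is no genuine obstacle at this step; the substantive work (analytic control of $|f(x)|$ via Proposition \ref{prop44}, the determinantal computation of Lemma \ref{lem42}, and the index bound of Lemma \ref{lem41}) has already been completed, and this corollary simply assembles them.
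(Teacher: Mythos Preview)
Your proof is correct and follows exactly the approach the paper indicates: it merely states that the corollary follows from equation (\ref{eq4b}), Lemma \ref{lem41}, and Theorem \ref{thm45}, and you have correctly spelled out the inclusion $S_{g,\varepsilon}^{\text{sim}} \subseteq T_{g,\varepsilon}^{\text{sim}}$ via the exponent computation $2g^2 - g - g^2\varepsilon - g(g-1) = g^2(1-\varepsilon)$, together with the trivial upper bound from $T_{g,\varepsilon}^{\text{sim}} \subseteq Y_g$.
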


\subsection{Class number of $R$} \label{Sub42}

In this section we consider the lower bound of the class number $h(R)$ of $R$. To give a lower bound of $h(R)$ from a lower bound of $\disc (R)$, we need a relation between them. Brauer-Siegel theorem provides a relation between $h (\mathcal{O}_E)$ and $\sqrt{\left | \disc (\mathcal{O}_E) \right |}$, but it is ineffective. For CM-fields, Stark \cite{Sta74} provided an effective result which is a weaker version of the Brauer-Siegel theorem. 

\begin{theorem} \label{thm47}
(\cite{Sta74}, Theorem 2) Let $K$ be a CM-field of degree $2n$ containing a totally real subfield $k$ of degree $n$ and $f$ be a positive integer given by $\left | \disc (\mathcal{O}_K) \right | = \disc (\mathcal{O}_k)^2 f$. For $\varepsilon$ in the range $0< \varepsilon \leq \frac{1}{2}$, there is an effectively computable constant $c(\varepsilon) >0$ such that
\begin{equation} \label{eq42a}
\begin{split}
h(K) & > h(k) \frac{1}{n \cdot n!} c(\varepsilon)^n \left | \disc (\mathcal{O}_k) \right |^{\frac{1}{2}-\frac{1}{n}-\varepsilon} f^{\frac{1}{2}-\frac{1}{2n}} \\
& \geq h(k) \frac{1}{n \cdot n!} c(\varepsilon)^n \left | \frac{\disc (\mathcal{O}_K)}{\disc (\mathcal{O}_k)} \right |^{\frac{1}{2}-\frac{1}{n}-\varepsilon}. 
\end{split}
\end{equation}
\end{theorem}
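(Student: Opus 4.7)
The plan is to follow Stark's original argument, which combines the analytic class number formula with an effective lower bound for $L(1,\chi)$ at the edge of the critical strip, where $\chi$ is the non-trivial character of $\Gal(K/k)$.

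First I would write down the analytic class number formulas. For the totally real field $k$ of degree $n$,
$$\mathrm{Res}_{s=1}\, \zeta_k(s) = \frac{2^n R_k h(k)}{w_k \sqrt{|\disc(\mathcal{O}_k)|}},$$
and for the CM field $K$ of degree $2n$, which has $r_1 = 0$ and $r_2 = n$,
$$\mathrm{Res}_{s=1}\, \zeta_K(s) = \frac{(2\pi)^n R_K h(K)}{w_K \sqrt{|\disc(\mathcal{O}_K)|}}.$$
The factorization $\zeta_K(s) = \zeta_k(s) L(s,\chi)$, together with the standard CM regulator identity $R_K/R_k = 2^{n-1}/Q$ (where $Q \in \{1,2\}$ is the Hasse unit index), the conductor-discriminant relation $|\disc(\mathcal{O}_K)| = |\disc(\mathcal{O}_k)|^2 f$, and $w_k = 2$, yields a clean identity of the shape
$$h(K) \;=\; h(k) \cdot \frac{w_K Q}{\pi^n} \cdot \sqrt{f} \cdot L(1,\chi),$$
so that $h(K)/h(k) \gg \sqrt{f}\, L(1,\chi)$ with an explicit constant depending only on $n$.

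Second, I would invoke Stark's effective lower bound for $L(1,\chi)$, which is the main technical input: for every $0 < \varepsilon \leq \frac{1}{2}$ there is an effectively computable $c(\varepsilon) > 0$ such that
$$L(1,\chi) \;>\; \frac{c(\varepsilon)^n}{n \cdot n!}\, |\disc(\mathcal{O}_k)|^{\frac{1}{2}-\frac{1}{n}-\varepsilon} \, f^{-\frac{1}{2n}}.$$
Substituting this into the class number identity above and absorbing the degree-dependent constants into $c(\varepsilon)^n$ produces the first displayed inequality in the statement. The second inequality is then purely algebraic: using $|\disc(\mathcal{O}_K)/\disc(\mathcal{O}_k)| = |\disc(\mathcal{O}_k)|\, f$, one checks directly that
$$|\disc(\mathcal{O}_k)|^{\frac{1}{2}-\frac{1}{n}-\varepsilon} f^{\frac{1}{2}-\frac{1}{2n}} \;\geq\; \left| \frac{\disc(\mathcal{O}_K)}{\disc(\mathcal{O}_k)} \right|^{\frac{1}{2}-\frac{1}{n}-\varepsilon},$$
since for $\varepsilon > 0$ the exponent of $f$ on the left exceeds the exponent on the right.

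The main obstacle is the effective lower bound for $L(1,\chi)$ cited in the second step; the Brauer-Siegel theorem yields only an ineffective version. Stark establishes the effective bound by analyzing $\zeta_K(s)$ near $s=1$ via a Perron-type contour integral of a truncated Dirichlet series and shifting the contour past any putative Siegel zero. Since $\zeta_K(s) = \zeta_k(s) L(s,\chi)$ and $\zeta_k$ has a simple pole at $s=1$, any exceptional real zero near $s=1$ must arise from $L(s,\chi)$; a Deuring--Heilbronn type argument then shows such a zero would force a wide zero-free region elsewhere, and a careful balancing of the main term, the residue contribution at $s=1$, and the error in the contour shift produces the quantitative bound with the stated explicit dependence. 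The delicate part, and the reason for the $c(\varepsilon)^n$ shape of the constant and the $1/(n \cdot n!)$ prefactor, is tracking the dependence on the degree $n$ uniformly throughout this analysis --- which is precisely what distinguishes Stark's effective result from the classical Brauer--Siegel statement.
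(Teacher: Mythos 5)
This statement is imported verbatim from Stark \cite{Sta74} (Theorem 2, deduced there from Theorem 2$'$); the paper gives no proof of its own, and its only addition is Remark \ref{rmk48}, observing that the factor $h(k)$ may be retained because Stark's argument actually bounds the ratio $h(K)/h(k)$ before discarding $h(k)\ge 1$. Your reconstruction is exactly Stark's route and is correct in outline --- the class-number-formula ratio via $\zeta_K=\zeta_k L(s,\chi)$ produces $h(K)/h(k)$ directly (which is precisely why the $h(k)$ factor survives), the effective lower bound for $L(1,\chi)$ is correctly flagged as the one genuine black box, and the second displayed inequality is the routine exponent comparison you describe; the only quibble is bookkeeping (the $\sqrt{|\disc(\mathcal{O}_k)|}$ belongs in the class-number identity rather than in the $L(1,\chi)$ bound, though your two displays combine to the same final estimate).
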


\begin{remark} \label{rmk48}
In the original statement of Theorem 2 in \cite{Sta74}, $h(k)$ does not appear in the inequality. One can easily check that the term $h(k)$ can be added in the right-hand side of the inequality (\ref{eq42a}) by following the proof of \cite[Theorem 2]{Sta74} which is a direct consequence of \cite[Theorem 2$'$]{Sta74}. 
\end{remark}


Motivated by the result of Stark, we consider a lower bound of $\frac{h(R)}{h(R^{+})}$ rather than a lower bound of $h(R)$. It turns out that this is essential for our proof. First we review some formulas relating the class numbers and the discriminants of $R$, $R^+$, $\mathcal{O}_E$ and $\mathcal{O}_{E^+}$. 

For an order $\mathcal{O}$ of a number field $F$, denote
$$
\widehat{\mathcal{O}} := \bigoplus_{\mathfrak{p}} \mathcal{O}_{\mathfrak{p}}
$$
where $\mathfrak{p}$ runs through the nonzero prime ideals of $\mathcal{O}$. By \cite[Proposition 1.12.9]{Neu99}, 
$$
h(R) = h(\mathcal{O}_E) \cdot \frac{[\widehat{\mathcal{O}_E}^{\times} : \widehat{R}^{\times}]}{[{\mathcal{O}^{\times}_E} : R^{\times}]} \; \text{  and  } \;
h(R^+) = h(\mathcal{O}_{E^+}) \cdot \frac{[\widehat{\mathcal{O}_{E^+}}^{\times} : \widehat{R^+}^{\times}]}{[{\mathcal{O}^{\times}_{E^+}} : {(R^+)}^{\times}]}.
$$
We also have 
$$
\left | \disc (R) \right | 
= [\mathcal{O}_E : R]^2 \cdot \left | \disc (\mathcal{O}_E) \right |
$$
and
$$
\left | \disc (R^+) \right | 
= [\mathcal{O}_{E^+} : R^+]^2 \cdot \left | \disc (\mathcal{O}_{E^+}) \right |.
$$


\begin{lemma} \label{lem49}
\begin{equation} \label{eq42b}
\left | \disc (R^+) \right | \leq p^{\frac{g^2}{2}(1+o(1))}.
\end{equation}
\end{lemma}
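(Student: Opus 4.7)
The plan is to relate $|\disc(R^+)|$ to the discriminant of the Weil polynomial $F$, which is already bounded via Lemma~\ref{lem3a}, and then to control the resulting correction factor via an integrality argument.

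Since $\pi \neq \sqrt{p}$ and $A_0$ is simple of dimension $g\ge 3$, the polynomial $F$ of degree $2g$ is the minimal polynomial of $\pi$, and $\alpha = \pi + \overline{\pi}$ generates $E^+$ with minimal polynomial $h(y) \in \Z[y]$ of degree $g$ whose real roots are $\alpha_i = \pi_i + \overline{\pi_i} \in (-2\sqrt{p}, 2\sqrt{p})$. Thus $R^+ = \Z[\alpha]$ and $|\disc(R^+)| = \prod_{i<j}(\alpha_i - \alpha_j)^2$. First I would write $F(x) = \prod_{i=1}^g(x^2 - \alpha_i x + p)$ over $E^+$ and apply the multiplicativity $\disc(fg) = \disc(f)\disc(g)\operatorname{Res}(f,g)^2$ together with the routine resultant computation $\operatorname{Res}(x^2 - \alpha_i x + p,\, x^2 - \alpha_j x + p) = p(\alpha_i - \alpha_j)^2$ to get the identity
$$|\disc(F)| = p^{g(g-1)}\cdot\prod_{i=1}^g (4p - \alpha_i^2)\cdot |\disc(R^+)|^2.$$

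Next, Lemma~\ref{lem3a} applied to the $2g$ Weil roots on the circle of radius $\sqrt{p}$ (exactly as in~(\ref{eq12})) gives $|\disc(F)|\le (2g)^{2g}\, p^{\binom{2g}{2}}$. For the correction factor, I would observe that $\prod_i(4p-\alpha_i^2) = (-1)^g h(2\sqrt{p})h(-2\sqrt{p})$, and since $h \in \Z[y]$ one can write $h(2\sqrt{p}) = a + b\sqrt{p}$ with $a, b \in \Z$, making this product equal to the integer $(-1)^g(a^2 - pb^2)$. It is nonzero: if $h(\pm 2\sqrt{p}) = 0$, some $\pi_i$ would equal $\pm\sqrt{p}$, which by irreducibility of $F$ forces $g = 1$, contradicting $g \geq 3$. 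Hence $\prod_i(4p - \alpha_i^2) \ge 1$.

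Combining these bounds gives $|\disc(R^+)|^2 \le (2g)^{2g}\, p^{\binom{2g}{2} - g(g-1)} = (2g)^{2g}\, p^{g^2}$, so $|\disc(R^+)| \le (2g)^g\, p^{g^2/2} = p^{\frac{g^2}{2}(1+o(1))}$ as $g\to\infty$, since $(2g)^g = p^{o(g^2)}$ for fixed $p$. The one place where care is needed is the integer lower bound on the correction factor: a crude estimate such as $|\alpha_i - \alpha_j| \le 4\sqrt{p}$ only yields $p^{(\frac{1}{2} + O(1/\log p))g^2}$, so the integrality of $h(2\sqrt{p})h(-2\sqrt{p})$ is precisely what is needed to pin the leading constant at $\tfrac{1}{2}$.
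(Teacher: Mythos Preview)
Your proof is correct and takes a genuinely different route from the paper. The paper bounds $\disc(h)$ directly: it rescales the roots $\alpha_i\in[-2\sqrt{p},2\sqrt{p}]$ to $[0,1]$ and invokes Fekete's theorem that the transfinite diameter of $[0,1]$ equals $\tfrac14$, which forces $\prod_{i<j}(y_j-y_i)^2\le 2^{-2g^2(1+o(1))}$ and hence $|\disc(R^+)|\le p^{\frac{g^2}{2}(1+o(1))}$. Your argument instead factors $F(x)=\prod_i(x^2-\alpha_ix+p)$, derives the exact identity $|\disc(F)|=p^{g(g-1)}\prod_i(4p-\alpha_i^2)\,|\disc(R^+)|^2$, bounds $|\disc(F)|$ by the circle inequality of Lemma~\ref{lem3a} already used in~(\ref{eq12}), and disposes of the correction factor via the arithmetic observation that $\prod_i(4p-\alpha_i^2)=(-1)^g h(2\sqrt{p})h(-2\sqrt{p})=(-1)^g(a^2-pb^2)$ is a nonzero integer, hence $\ge 1$. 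The upshot is that you avoid potential theory entirely and reuse only tools already present in the paper, while obtaining the slightly sharper explicit bound $|\disc(R^+)|\le(2g)^g p^{g^2/2}$ (no $o(1)$ from the Fekete limit). The paper's approach, on the other hand, is more self-contained in that it never touches $\disc(F)$, and it illustrates why the constant is exactly $\tfrac12$ geometrically (the capacity of the trace interval). Your closing remark is apt: the naive estimate $4p-\alpha_i^2\le 4p$ would only give $\prod_i(4p-\alpha_i^2)\ge$ nothing useful from below, so the integrality step is precisely the substitute for Fekete in your argument.
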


Before giving a proof, we introduce the notion of transfinite diameter. The \textit{transfinite diameter} of a compact set $A \subset \mathbb{C}$ is defined by
\begin{equation} \label{eqnew4}
\gamma (A) := \lim_{n \rightarrow \infty} \max_{z_1, \cdots, z_n \in A} \left ( \prod_{1 \leq i <j \leq n} \left | z_j - z_i \right | \right )^{\frac{2}{n(n-1)}}.
\end{equation}
It is proved by Fekete \cite[Section 6]{Fek23} that 
\begin{equation} \label{eqnew6}
\gamma([0,1]) = \frac{1}{4}.
\end{equation}

\begin{proof}
For a polynomial $f(x) = F(a_1, \cdots, a_g)$, 
$$
\frac{f(x)}{x^g} = \left (x^g + \left (\frac{p}{x}  \right )^g  \right ) +a_1 \left ( x^{g-1}+\left (\frac{p}{x}  \right )^{g-1} \right ) + \cdots + a_g = h \left ( x+\frac{p}{x} \right )
$$
for some monic polynomial $h$ of degree $g$. Since $f$ is the minimal polynomial of $\pi$, $h$ is the minimal polynomial of $\pi + \overline{\pi}$ so
$$
\disc (R^+) = \disc (h).
$$
Any root $\alpha$ of $h$ is of the form $\pi_1 + p \pi_1^{-1}$ for some root $\pi_1$ of $f$. Since the absolute value of $\pi_1$ is $\sqrt{p}$, $\alpha = \pi_1 + \overline{\pi_1} \in \R$ and $\alpha \in [-2 \sqrt{p}, 2 \sqrt{p}]$. Now $\disc (h)$ is given by
$$
\prod_{1 \leq i < j  \leq g} (x_j-x_i)^2
$$
for some real numbers
$$
-2 \sqrt {p} \leq x_1 \leq \cdots \leq x_g \leq 2 \sqrt{p}.
$$
Denote
$$
y_i = \frac{x_i}{4 \sqrt{p}} + \frac{1}{2} \:\: (1 \leq i \leq g).
$$
Then $y_i \in [0,1]$ for each $1 \leq i \leq g$ and
\begin{equation} \label{eqnew5}
\begin{split}
\prod_{1 \leq i < j  \leq g} (x_j-x_i)^2 & = (4 \sqrt{p})^{2 \binom{g}{2}} \prod_{1 \leq i < j  \leq g} (y_j-y_i)^2 \\
& = p^{\frac{1}{2}g^2(1+o(1))}2^{2g^2} \prod_{1 \leq i < j  \leq g} (y_j-y_i)^2.
\end{split}
\end{equation}
Now it is enough to show that
$$
\prod_{1 \leq i < j  \leq g} (y_j-y_i)^2 \leq \frac{1}{2^{2g^2(1+o(1))}}, 
$$
which is a direct consequence of the equation (\ref{eqnew6}).
\end{proof}


\begin{proposition} \label{prop410}
For any $\varepsilon \in (0, \frac{1}{2} )$, 
\begin{equation} \label{eq42c}
\frac{h(R)}{h(R^+)} \geq \frac{\left | \disc (R) \right | ^{\frac{1}{2}-\frac{1}{g}-\varepsilon}}{2^{g^2 \varepsilon }p^{\frac{1}{4}g^2(1+2 \varepsilon + o(1))}}.
\end{equation}
\end{proposition}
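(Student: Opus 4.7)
The proof will combine Stark's inequality (Theorem~\ref{thm47}) applied to the CM extension $E/E^+$ with the Neukirch class-number formulas for orders quoted just before Lemma~\ref{lem49}, and finally invoke Lemma~\ref{lem49}. With $K=E$, $k=E^+$, and $n=g$, Stark directly gives
\[
\frac{h(\mathcal{O}_E)}{h(\mathcal{O}_{E^+})} \;\geq\; \frac{c(\varepsilon)^g}{g\cdot g!}\left|\frac{\disc(\mathcal{O}_E)}{\disc(\mathcal{O}_{E^+})}\right|^{\frac{1}{2}-\frac{1}{g}-\varepsilon}.
\]
Using $|\disc(\mathcal{O}_E)|=|\disc(R)|/[\mathcal{O}_E:R]^2$ and $|\disc(\mathcal{O}_{E^+})|=|\disc(R^+)|/[\mathcal{O}_{E^+}:R^+]^2$ introduces a ratio $\bigl([\mathcal{O}_{E^+}:R^+]/[\mathcal{O}_E:R]\bigr)^{1-2/g-2\varepsilon}$, and the Neukirch formula $h(R)=h(\mathcal{O}_E)\cdot[\widehat{\mathcal{O}_E}^\times:\widehat R^\times]/[\mathcal{O}_E^\times:R^\times]$ (and its $R^+$-analog) converts the left-hand side to $h(R)/h(R^+)$ at the cost of an additional conductor/unit-index ratio.

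After these substitutions, the proof reduces to showing that the combined correction factor
\[
\mathcal{C} \;:=\; \frac{[\widehat{\mathcal{O}_E}^\times:\widehat R^\times]}{[\widehat{\mathcal{O}_{E^+}}^\times:\widehat{R^+}^\times]}\cdot\frac{[\mathcal{O}_{E^+}^\times:(R^+)^\times]}{[\mathcal{O}_E^\times:R^\times]}\cdot\left(\frac{[\mathcal{O}_{E^+}:R^+]}{[\mathcal{O}_E:R]}\right)^{1-2/g-2\varepsilon}
\]
is bounded below by $2^{-O(g)}$. Applying Lemma~\ref{lem49} to bound $|\disc(R^+)|^{1/2-1/g-\varepsilon}\leq p^{g^2(1-2\varepsilon)/4\,(1+o(1))}$ then gives a denominator of $p^{g^2(1-2\varepsilon)/4\,(1+o(1))}\cdot 2^{O(g)}$, which is indeed stronger than $2^{g^2\varepsilon}\,p^{g^2(1+2\varepsilon)/4\,(1+o(1))}$ once $g$ is large. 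In other words, the $p^{g^2\varepsilon}\cdot 2^{g^2\varepsilon}$ slack built into the target denominator is exactly what is needed to absorb the $2^{O(g)}$ loss from the correction factor.

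The main obstacle is bounding $\mathcal{C}$ from below, which I would do using three ingredients. First, the relation $R=R^+\oplus R^+\pi$ as a $\Z$-module (with $\pi$ satisfying the quadratic $\pi^2-(\pi+\overline\pi)\pi+p=0$ over $R^+$) together with the analogous decomposition at the level of maximal orders forces $[\mathcal{O}_E:R]\geq[\mathcal{O}_{E^+}:R^+]^2$, so that $\bigl([\mathcal{O}_E:R]/[\mathcal{O}_{E^+}:R^+]\bigr)^{2/g+2\varepsilon}\geq 1$. Second, the Euler-product expression for the conductor index gives $[\widehat{\mathcal{O}_E}^\times:\widehat R^\times]=[\mathcal{O}_E:R]\cdot 2^{\pm O(g)}$ (and analogously for $R^+$), since only $\leq 2g$ primes of $\mathcal{O}_E$ divide the conductor and each local factor $(1-1/N\mathfrak{P})/(1-1/N\mathfrak{Q})$ lies in $[\tfrac12,2]$. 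Third, the CM-field unit theorem, applied after verifying $R\cap E^+=R^+$ (symmetric $\Z$-polynomials in $\pi,\overline\pi$ lie in $\Z[\pi+\overline\pi]$ since $\pi\overline\pi=p\in\Z$), gives $[\mathcal{O}_E^\times:\mu(E)\mathcal{O}_{E^+}^\times],[R^\times:\mu(R)(R^+)^\times]\in\{1,2\}$, making the global-unit ratio bounded between $\tfrac12$ and $2$. Multiplying these together yields $\mathcal{C}\geq 2^{-O(g)}\cdot\bigl([\mathcal{O}_E:R]/[\mathcal{O}_{E^+}:R^+]\bigr)^{2/g+2\varepsilon}\cdot[\mathcal{O}_E:R]^0\geq 2^{-O(g)}$, completing the argument. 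The subtlest part of the bookkeeping is the local conductor analysis in ingredient two, where one must track how primes of $\mathcal{O}_{E^+}$ split, ramify, or stay inert in $\mathcal{O}_E$ and correctly match the local unit indices across the CM extension.
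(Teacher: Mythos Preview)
Your overall strategy matches the paper's: combine Stark's inequality with the Neukirch order formulas, reduce to bounding the correction factor $\mathcal{C}$, and handle it in three pieces. Your first ingredient---using $R = R^+ \oplus R^+\pi$ to deduce $[\mathcal{O}_E:R] \geq [\mathcal{O}_{E^+}:R^+]^2$---is in fact sharper than the paper's treatment: the paper simply bounds $\bigl([\mathcal{O}_{E^+}:R^+]/[\mathcal{O}_E:R]\bigr)^{-2/g-2\varepsilon} \geq [\mathcal{O}_{E^+}:R^+]^{-2/g-2\varepsilon}$ and then invokes Lemma~\ref{lem49}, which is precisely where the extra $2^{g^2\varepsilon}p^{g^2\varepsilon/2}$ in the stated denominator comes from.

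There is, however, a genuine gap in your second ingredient. The claim that at most $2g$ primes of $\mathcal{O}_E$ divide the conductor is unfounded: the conductor divides $[\mathcal{O}_E:R]$, and this index (potentially as large as $p^{O(g^2)}$) can have far more than $2g$ prime factors. The paper handles this by bounding $|\disc(\Z[\pi])| \leq (2gp)^{2g^2}$, whence at most $2gp+2g^2$ \emph{rational} primes divide it, with at most $2g$ primes of $\mathcal{O}_E$ over each; the resulting Euler product then loses only a factor $(2gp+2g^2+1)^{-2g} = p^{-o(g^2)}$, not $2^{-O(g)}$. Your local-factor bookkeeping $(1-1/N\mathfrak{P})/(1-1/N\mathfrak{Q})$ is also problematic, since primes of $R$ and of $\mathcal{O}_E$ above a given $\ell$ need not biject; the paper instead bounds the Haar-measure ratios $\mu_\mathfrak{p}(\mathcal{O}_{E,\mathfrak{p}}^\times)/\mu_\mathfrak{p}(\mathcal{O}_{E,\mathfrak{p}})$ and $\mu_\mathfrak{p}((R^+_\mathfrak{p})^\times)/\mu_\mathfrak{p}(R^+_\mathfrak{p})$ separately. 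Your third ingredient likewise overlooks that $|\mu(E)|$ can be as large as $O(g^2)$, so the global-unit ratio is only $\geq 1/(32g^2)$, not $\geq 1/2$. None of these corrections breaks the final result---replacing your $2^{-O(g)}$ by $p^{-o(g^2)}$ throughout is still absorbed by the $o(1)$ in the target denominator---but the bounds you state are not justified as written.
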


\begin{proof}
Let $c(\varepsilon)>0$ be a constant as in Theorem \ref{thm47}. By Theorem \ref{thm47} and the formulas above, 
\begin{equation*}
\begin{split}
\frac{h(R)}{h(R^+)} & = \frac{h(\mathcal{O}_E) \cdot \frac{[\widehat{\mathcal{O}_E}^{\times} : \widehat{R}^{\times}]}{[{\mathcal{O}^{\times}_E} : R^{\times}]}}{h(\mathcal{O}_{E^+}) \cdot \frac{[\widehat{\mathcal{O}_{E^+}}^{\times} : \widehat{R^+}^{\times}]}{[{\mathcal{O}^{\times}_{E^+}} : {(R^+)}^{\times}]}} \\
& > \frac{1}{g \cdot g!} c(\varepsilon)^g 
\cdot \left | \frac{\disc (\mathcal{O}_E)}{\disc (\mathcal{O}_{E^+})} \right |^{\frac{1}{2}-\frac{1}{g}-\varepsilon} 
\cdot \frac{[\widehat{\mathcal{O}_E}^{\times} : \widehat{R}^{\times}]}{[{\mathcal{O}^{\times}_E} : R^{\times}]} 
\cdot \frac{[{\mathcal{O}^{\times}_{E^+}} : {(R^+)}^{\times}]}{[\widehat{\mathcal{O}_{E^+}}^{\times} : \widehat{R^+}^{\times}]} \\
& = \frac{1}{g \cdot g!} c(\varepsilon)^g 
\cdot \left | \frac{\disc (R)}{\disc (R^+)} \right |^{\frac{1}{2}-\frac{1}{g}-\varepsilon} 
\left ( \frac{[ \mathcal{O}_{E^+} : R^+ ]}{[ \mathcal{O}_E : R ]} \right )^{1-\frac{2}{g}-2 \varepsilon}
\cdot \frac{[{\mathcal{O}^{\times}_{E^+}} : {(R^+)}^{\times}]}{[{\mathcal{O}^{\times}_E} : R^{\times}]} 
\cdot \frac{[\widehat{\mathcal{O}_E}^{\times} : \widehat{R}^{\times}]}{[\widehat{\mathcal{O}_{E^+}}^{\times} : \widehat{R^+}^{\times}]}.
\end{split}
\end{equation*}
Since
$$
 \frac{1}{g \cdot g!} c(\varepsilon)^g 
\cdot \left | \frac{\disc (R)}{\disc (R^+)} \right |^{\frac{1}{2}-\frac{1}{g}-\varepsilon}  \geq \frac{ \left | \disc (R) \right | ^{\frac{1}{2}-\frac{1}{g}-\varepsilon}}{p^{\frac{1}{4}g^2(1+o(1))}}
$$
by Lemma \ref{lem49}, it is enough to show that
\begin{equation} \label{eq42d}
\left ( \frac{[ \mathcal{O}_{E^+} : R^+ ]}{[ \mathcal{O}_E : R ]} \right )^{1-\frac{2}{g}-2 \varepsilon}
\cdot \frac{[{\mathcal{O}^{\times}_{E^+}} : {(R^+)}^{\times}]}{[{\mathcal{O}^{\times}_E} : R^{\times}]} 
\cdot \frac{[\widehat{\mathcal{O}_E}^{\times} : \widehat{R}^{\times}]}{[\widehat{\mathcal{O}_{E^+}}^{\times} : \widehat{R^+}^{\times}]} 
\geq \frac{1}{2^{g^2 \varepsilon} p^{\frac{1}{2}g^2 (\varepsilon +o(1))}}.
\end{equation}
We divide it to three parts and prove that each part is not too small. 

\begin{enumerate}


\item By \cite[Theorem 4.12]{Was97}, 
$$
\frac{[\mathcal{O}^{\times}_{E^+} : {(R^+)}^{\times}]}{[\mathcal{O}^{\times}_E : R^{\times}]} \geq \frac{1}{[\mathcal{O}^{\times}_E : \mathcal{O}^{\times}_{E^+}]} \geq \frac{1}{2 \left |\mu_E  \right |}
$$
where $\mu_E$ is the group of roots of unity in $E$. If $\left |\mu_E  \right | = r$, then 
$$
2g = [E: \Q] \geq [\Q (\zeta_r) : \Q] = \phi(r) \geq \frac{\sqrt{r}}{2}
$$
so $2 \left |\mu_E  \right | =2r \leq 32g^2$. 


\item By Lemma \ref{lem49}, 
\begin{equation*}
\begin{split}
\left ( \frac{[ \mathcal{O}_{E^+} : R^+ ]}{[ \mathcal{O}_E : R ]} \right )^{-\frac{2}{g}-2 \varepsilon} &  \geq \frac{1}{[\mathcal{O}_{E^+} : R^+]^{\frac{2}{g} + 2 \varepsilon}} \\
& \geq \frac{1}{\left |  \disc (R^+) \right | ^{\frac{1}{g}+ \varepsilon}} \\
& \geq \frac{1}{(2^{g^2}p^{\frac{g^2}{2}(1+o(1))})^{\frac{1}{g} + \varepsilon}} \\
& = \frac{1}{2^{g^2 \varepsilon} p^{\frac{1}{2}g^2 (\varepsilon +o(1))}}.
\end{split}
\end{equation*}


\item Since $[ \mathcal{O}_E : R ] = [ \widehat{\mathcal{O}_E} : \widehat{R} ]$ and $[ \mathcal{O}_{E^+} : R^+ ] = [ \widehat{\mathcal{O}_{E^+}} : \widehat{R^+} ]$, we have
$$
\frac{[ \mathcal{O}_{E^+} : R^+ ]}{[ \mathcal{O}_E : R ]}  \cdot \frac{[\widehat{\mathcal{O}_E}^{\times} : \widehat{R}^{\times}]}{[\widehat{\mathcal{O}_{E^+}}^{\times} : \widehat{R^+}^{\times}]} 
 = \frac{[\widehat{\mathcal{O}_E}^{\times} : \widehat{R}^{\times}]}{[\widehat{\mathcal{O}_E} : \widehat{R}]} 
 \cdot \frac{[\widehat{\mathcal{O}_{E^+}} : \widehat{R^+}]}{[\widehat{\mathcal{O}_{E^+}}^{\times} : \widehat{R^+}^{\times}]}.
$$
Let $\mu_{\mathfrak{p}}$ be a usual Haar measure on a local field $E_{\mathfrak{p}}$. Then
\begin{equation*}
\begin{split}
\frac{[\widehat{\mathcal{O}_E}^{\times} : \widehat{R}^{\times}]}{[\widehat{\mathcal{O}_E} : \widehat{R}]}
& = \prod_{\mathfrak{p} \mid \disc (R)} \frac{[\mathcal{O}^{\times}_{E, \mathfrak{p}} : R^{\times}_{\mathfrak{p}}]}{[\mathcal{O}_{E, \mathfrak{p}} : R_{\mathfrak{p}}]} \\
& = \prod_{\mathfrak{p} \mid \disc (R)} \frac{\mu_{\mathfrak{p}}(\mathcal{O}^{\times}_{E, \mathfrak{p}})}{\mu_{\mathfrak{p}}(R^{\times}_{\mathfrak{p}})} \cdot \frac{\mu_{\mathfrak{p}}(R_{\mathfrak{p}})}{\mu_{\mathfrak{p}}(\mathcal{O}_{E, \mathfrak{p}})} \\
& \geq \prod_{\mathfrak{p} \mid \disc (\Z [\pi])} \frac{\mu_{\mathfrak{p}}(\mathcal{O}^{\times}_{E, \mathfrak{p}})}{\mu_{\mathfrak{p}}(\mathcal{O}_{E, \mathfrak{p}})} \\
& = \prod_{\mathfrak{p} \mid \disc (\Z [\pi])} \left ( 1-\frac{1}{\kappa (\mathfrak{p})} \right )
\end{split}
\end{equation*}
for $\kappa (\mathfrak{p}) := \left | \mathcal{O}_E / \mathfrak{p} \right |$. (Note that if $\mathfrak{p} \nmid \disc (R)$, then $\mathfrak{p} \nmid [\mathcal{O}_E : R]$ so $\mathcal{O}_{E, \mathfrak{p}} = R_{\mathfrak{p}}$.) Denote the $j$-th prime by $p_j$. By Lemma \ref{lem3a}, 
$$
\left | \disc (\Z [\pi]) \right | = \left | \disc (f) \right | \leq (2g)^{2g} \sqrt{p}^{2 \binom{2g}{2}} < (2gp)^{2g^2}
$$
so the number of primes in $\Z$ which divide $\disc (\Z [\pi])$ is less than $2gp+2g^2$ because
$$
\prod_{j=2gp+1}^{2gp+2g^2}p_j > \prod_{j=2gp+1}^{2gp+2g^2} 2gp = (2gp)^{2g^2}.
$$
For each prime in $\Z$, there are at most $2g$ primes in $\mathcal{O}_E$ lying above it. Thus
\begin{equation*}
\begin{split}
\prod_{\mathfrak{p} \mid \disc (\Z [\pi])} \left ( 1-\frac{1}{\kappa (\mathfrak{p})} \right )
& > \prod_{j=1}^{2gp+2g^2} \left ( 1-\frac{1}{p_j} \right )^{2g} \\
& \geq \prod_{j=1}^{2gp+2g^2} \left ( 1-\frac{1}{j+1} \right )^{2g} \\ 
& = \frac{1}{(2gp+2g^2+1)^{2g}} \\
& = \frac{1}{p^{o(g^2)}}
\end{split}
\end{equation*}
By the same reason, 
$$
\frac{[\widehat{\mathcal{O}_{E^+}} : \widehat{R^+}]}{[\widehat{\mathcal{O}_{E^+}}^{\times} : \widehat{R^+}^{\times}]}
\geq \prod_{\mathfrak{p} \mid \disc ( R^+)} \frac{\mu_{\mathfrak{p}}((R^+_{\mathfrak{p}})^{\times})}{\mu_{\mathfrak{p}}(R^{+}_{\mathfrak{p}})} \geq \frac{1}{p^{o(g^2)}}.
$$
\end{enumerate}

It is clear that (1), (2) and (3) imply the inequality (\ref{eq42d}).
\end{proof}

Now we are ready to prove the main theorem of the paper. 

\begin{theorem} \label{thm411}
\begin{equation} \label{eq42e}
B(p,g) \geq B^{\text{sim}}(p,g) \geq p^{\frac{1}{2}g^2(1+o(1))}. 
\end{equation}
\end{theorem}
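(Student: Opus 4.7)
The plan is to combine the three principal inputs prepared in Section~\ref{Sec4}: Corollary~\ref{cor22} (which gives $|\Isog(\pi)| \geq h(R)$), Corollary~\ref{cor46} (which exhibits $p^{\frac{1}{4}g^2(1+o(1))}$ tuples $\mathbf{a} \in T_{g,\varepsilon}^{\text{sim}}$ with $|\disc(R_{\mathbf{a}})| \geq g^{2g}p^{g^2(1-\varepsilon)}$), and Proposition~\ref{prop410} (the Stark-type lower bound on $h(R)/h(R^+)$). First I would record that for $\mathbf{a} \in Y_g^{\text{sim}}$ the polynomial $F(\mathbf{a})$ is the minimal polynomial of the associated Weil $p$-number $\pi_{\mathbf{a}}$, so distinct $\mathbf{a}$ produce distinct isogeny classes of simple abelian varieties. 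Consequently
\begin{equation*}
B^{\text{sim}}(p,g) \;\geq\; \sum_{\mathbf{a} \in T_{g,\varepsilon}^{\text{sim}}} |\Isog(\pi_{\mathbf{a}})| \;\geq\; \sum_{\mathbf{a} \in T_{g,\varepsilon}^{\text{sim}}} h(R_{\mathbf{a}}).
\end{equation*}

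Next, for each $\mathbf{a} \in T_{g,\varepsilon}^{\text{sim}}$ I would use the trivial bound $h(R_{\mathbf{a}}^+) \geq 1$ in Proposition~\ref{prop410} together with the discriminant estimate from Corollary~\ref{cor46} to obtain
\begin{equation*}
h(R_{\mathbf{a}}) \;\geq\; \frac{(g^{2g}p^{g^2(1-\varepsilon)})^{\frac{1}{2}-\frac{1}{g}-\varepsilon}}{2^{g^2\varepsilon}\, p^{\frac{1}{4}g^2(1+2\varepsilon+o(1))}} \;\geq\; p^{g^2\left(\frac{1}{4}-c(\varepsilon)\right)(1+o(1))},
\end{equation*}
where routine algebraic manipulation (using $2^{g^2\varepsilon} = p^{g^2\varepsilon\log_p 2}$) produces $c(\varepsilon) = \varepsilon(2+\log_p 2) - \varepsilon^2$, in particular $c(\varepsilon) \to 0$ as $\varepsilon \to 0$. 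The $g^{2g(1/2-1/g-\varepsilon)}$ factor contributes only $O(g\log g) = o(g^2)$ to the exponent and is absorbed into the $(1+o(1))$. Multiplying by $|T_{g,\varepsilon}^{\text{sim}}| = p^{\frac{1}{4}g^2(1+o(1))}$ from Corollary~\ref{cor46} yields
\begin{equation*}
B^{\text{sim}}(p,g) \;\geq\; p^{g^2\left(\frac{1}{2}-c(\varepsilon)\right)(1+o(1))}
\end{equation*}
for every fixed $\varepsilon \in (0,\tfrac{1}{2})$, and sending $\varepsilon \to 0$ produces the desired $p^{\frac{1}{2}g^2(1+o(1))}$.

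The main subtlety I anticipate is the bookkeeping of the nested error terms. Each $o(1)$ above depends implicitly on $\varepsilon$ through the threshold $g_0(\varepsilon)$ of Proposition~\ref{prop44} and the hidden constants in Proposition~\ref{prop410} and Lemma~\ref{lem49}, while the $o(1)$ in the final statement must be genuine as $g \to \infty$ (with no $\varepsilon$-dependence). The clean way to conclude is: for every $\delta>0$, first fix $\varepsilon$ small enough that $c(\varepsilon)<\delta/2$, and then choose $g_\delta$ so large that every $o(1)$ term at this fixed $\varepsilon$ is less than $\delta/2$ in absolute value, giving $\log_p B^{\text{sim}}(p,g) \geq (\tfrac{1}{2}-\delta)g^2$ for all $g\geq g_\delta$, which is precisely the meaning of the claimed asymptotic.
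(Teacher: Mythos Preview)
Your proposal is correct and follows essentially the same route as the paper's proof: sum $|\Isog(\pi_{\mathbf a})|\ge h(R_{\mathbf a})$ over $\mathbf a\in T_{g,\varepsilon}^{\text{sim}}$, feed the discriminant lower bound from Corollary~\ref{cor46} into Proposition~\ref{prop410} (using $h(R^+)\ge 1$), multiply by $|T_{g,\varepsilon}^{\text{sim}}|=p^{\frac14 g^2(1+o(1))}$, and let $\varepsilon\to 0$. The only cosmetic differences are that the paper keeps two independent parameters $\varepsilon_1,\varepsilon_2$ where you use a single $\varepsilon$, and that you spell out more carefully the diagonal $\varepsilon$--$\delta$ argument needed to remove the $\varepsilon$-dependence from the final $o(1)$.
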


\begin{proof}
For $\mathbf{a} \in Y_g^{\text{sim}}$, denote a Weil $p$-number which corresponds to the Weil $p$-polynomial $F(\mathbf{a})$ by $\pi_{\mathbf{a}}$ and let $R_{\mathbf{a}} := \Z [\pi_{\mathbf{a}}, \overline{\pi_{\mathbf{a}}} ]$. Then for any $\varepsilon_1, \varepsilon_2 \in (0, \frac{1}{2})$, 
\begin{equation} \label{eq42f}
\begin{split}
B^{\text{sim}}(p,g) & \geq \sum_{\mathbf{a} \in Y_g^{\text{sim}}} \left | \Isog (\pi_{\mathbf{a}}) \right | \\
& \geq \sum_{\mathbf{a} \in Y_g^{\text{sim}}} h(R_{\mathbf{a}}) \:\: (\text{Corollary } \ref{cor22}) \\
& \geq \sum_{\mathbf{a} \in T_{g, \varepsilon_1}^{\text{sim}}} h(R_{\mathbf{a}}) \\
& \geq \frac{1}{2^{g^2 \varepsilon_2}p^{\frac{1}{4}g^2(1+2 \varepsilon_2 + o(1))}} 
\cdot \sum_{\mathbf{a} \in T_{g, \varepsilon_1}^{\text{sim}}} \left | \disc (R_{\mathbf{a}}) \right | ^{\frac{1}{2}-\frac{1}{g}-\varepsilon_2} \:\: (\text{Proposition } \ref{prop410}) \\
& \geq \frac{1}{2^{g^2 \varepsilon_2}p^{\frac{1}{4}g^2(1+2 \varepsilon_2 + o(1))}} 
\cdot \left | T_{g, \varepsilon_1}^{\text{sim}} \right | \cdot (g^{2g}p^{g^2 (1 - \varepsilon_1)})^{\frac{1}{2} -\frac{1}{g} - \varepsilon_2} \\
& = \frac{1}{2^{g^2 \varepsilon_2}} \cdot p^{g^2\left ( -\frac{1}{2} \varepsilon_2 + (1 - \varepsilon_1)\left ( \frac{1}{2} - \varepsilon_2 \right ) +o(1) \right )} \:\: (\text{Corollary } \ref{cor46}) .
\end{split}
\end{equation}
If we let $\varepsilon_1, \varepsilon_2 \rightarrow 0$ in (\ref{eq42f}), then the conclusion follows.
\end{proof}


\section*{Acknowledgments}

This work was partially supported by Samsung Science and Technology Foundation (SSTF-BA1802-03). 
The author is very grateful to National Center for Theoretical Sciences (NCTS) in Taiwan for support and hospitality, especially to Chia-Fu Yu for his helpful comments, suggestions and encouragement during the visit to NCTS. 
The author thank Sungmun Cho, Michael Lipnowski, Stefano Marseglia and Jacob Tsimerman for their helpful comments. The author also deeply thank the anonymous referee for their comment about arithmetic capacity which greatly improved the main result of the paper and other comments that improved the exposition of the paper. 


\vspace{3mm}

\footnotesize{
\textsc{Jungin Lee: Department of Mathematics, Pohang University of Science and Technology, Pohang, Gyeongbuk, Republic of Korea 37655.} 

\textit{E-mail address}: \changeurlcolor{black}\href{mailto:jilee.math@gmail.com}{jilee.math@gmail.com} 


\begin{thebibliography}{99}

\bibitem{Fek23}
M. Fekete, \"Uber die Verteilung der Wurzeln bei gewissen algebraischen Gleichungen mit ganzzahligen Koeffizienten, Math. Z. 17 (1923), 228-249. 

\bibitem{HR18}
G. H. Hardy and S. Ramanujan, Asymptotic formulae in combinatory analysis, Proc. Lond. Math. Soc. (2) 17 (1918), 75-115. 

\bibitem{Hon68}
T. Honda, Isogeny classes of abelian varieties over finite fields, J. Math. Soc. Japan 20 (1968), 83-95.

\bibitem{Iva12}
S. Ivanov, answer to Mathoverflow question 106767, Product of random diagonals on the unit circle (2012). 

\bibitem{Kot90}
R. E. Kottwitz, Shimura varieties and $\lambda$-adic representations, in Automorphic Forms, Shimura Varieties, and L-functions, vol. I, Academic Press, 1990, 161-209.

\bibitem{Lee}
J. Lee, On a number of isogeny classes of simple abelian varieties over finite fields, arXiv:1907.04594, To appear in Math. Z.

\bibitem{LT18}
M. Lipnowski and J. Tsimerman, How large is $A_g(\F_q)$?, Duke Math. J. 167 (2018), 3403-3453. 

\bibitem{Mar18}
S. Marseglia, Computing the ideal class monoid of an order, arXiv:1805.09671, To appear in J. Lond. Math. Soc.

\bibitem{Neu99}
J. Neukirch, Algebraic number theory, Grundlehren der Mathematischen Wissenschften 322, Springer, Berlin, 1999.

\bibitem{Sch87}
R. Schoof, Nonsingular plane cubic curves over finite fields, J. Combin. Theory Ser. A (2) 46 (1987), 183-211. 

\bibitem{Sta74}
H. M. Stark, Some effective cases of the Brauer-Siegel theorem, Invent. Math. 23 (1974), 135-152. 

\bibitem{Tat66}
J. Tate, Endomorphisms of abelian varieties over finite fields, Invent. Math. 2 (1966), 134-144.

\bibitem{Was97}
L. C. Washington, Introduction to Cyclotomic Fields, Graduate Texts in Math. 83, Second ed., Springer, New York, 1997. 

\bibitem{Wat69}
W. C. Waterhouse, Abelian varieties over finite fields, Ann. Sci. \'Ec. Norm. Sup\'er. (4) 2 (1969), 521-560.

\bibitem{XYY16}
J. Xue, T.-C. Yang and C.-F. Yu, On superspecial abelian surfaces over finite fields, Doc. Math. 21 (2016), 1607-1643.

\bibitem{XY20}
J. Xue and C.-F. Yu, On counting certain abelian varieties over finite fields, arXiv:1801.00229, To appear in Acta Math. Sin. (Engl. Ser.).


\end{thebibliography}
\end{document}